\newcommand{\field}[1]{\mathbb{#1}}
\newcommand{\N}{\field{N}}
\newcommand{\Z}{\field{Z}}
\numberwithin{equation}{section}
\newtheorem{theorem}{Theorem}[section]
\newtheorem{lemma}[theorem]{Lemma}
\newtheorem{corollary}[theorem]{Corollary}
\newtheorem{conjecture}[theorem]{Conjecture}
\theoremstyle{remark}
\newtheorem*{definition}{Definition}
\renewenvironment{proof}[1][Proof]{\begin{trivlist}
\item[\hskip \labelsep {\bfseries #1:}]}{\qed\end{trivlist}}
\title{Cylindric partitions and some new $A_2$ Rogers--Ramanujan identities}
\author{Sylvie Corteel}
\address{Department of Mathematics, University of California, Berkeley, USA}
\email{corteel@berkeley.edu}
\author{Jehanne Dousse}
\address{Univ Lyon, CNRS, Université Claude Bernard Lyon 1, UMR5208, Institut Camille Jordan, F-69622 Villeurbanne, France}
\email{dousse@math.cnrs.fr}
\author{Ali Kemal Uncu}
\address{Johann Radon Institute for Computational and Applied Mathematics, Austrian Academy of Science, Altenbergerstraße 69, A-4040 Linz, Austria}
\email{akuncu@risc.jku.at}
\begin{document}

\begin{abstract}
We study the generating functions for cylindric partitions with profile $(c_1,c_2,c_3)$ for all $c_1,c_2,c_3$ such that $c_1+c_2+c_3=5$. This allows us to discover and prove seven new $A_2$ Rogers--Ramanujan identities modulo $8$ with quadruple sums, related with
work of Andrews, Schilling, and Warnaar.
\end{abstract}

\maketitle

\section{Introduction and statement of results}
A partition of a positive integer $n$ is a non-increasing sequence of natural numbers whose sum is $n$. For example, the partitions of $4$ are $4, 3+1, 2+2, 2+1+1,$ and $ 1+1+1+1.$ 

The Rogers--Ramanujan identities \cite{RogersRamanujan}, given by
\begin{align}
\sum_{n\geq 0}\frac{q^{n^2}}{(q;q)_n}&=\prod_{n\geq0}\frac{1}{(q;q^5)_{\infty} (q^4;q^5)_{\infty}}, \label{eq:rr1}\\
\sum_{n\geq 0}\frac{q^{n^2+n}}{(q;q)_n}&=\prod_{n\geq0}\frac{1}{(q^2;q^5)_{\infty} (q^3;q^5)_{\infty}}, \label{eq:rr2}
\end{align} 
are probably the most famous identities in the theory of $q$-series and partitions. 
Here we used the standard $q$-series notation, for $n \in \mathbb{N} \cup \{\infty\}$ and $j \in \N$,
\begin{align*}
(a;q)_n &:= \prod_{k=0}^{n-1} (1-aq^k),\\
(a_1, \dots, a_j ; q)_n &:= (a_1;q)_n \cdots (a_j;q)_n.
\end{align*}
We also use the convention that $1/(q;q)_n=0$ for negative $n$.

The Rogers--Ramanujan identities were interpreted in terms of partitions by MacMahon \cite{MacMahon} and Schur \cite{SchurRR} independently, and can be formulated as follows.
\begin{theorem}[Rogers--Ramanujan identities, partition version]
\label{th:RRcomb}
Let $i=0$ or $1$. For every natural number $n$, the number of partitions of $n$ such that the difference between two consecutive parts is at least $2$ and the part $1$ appears at most $i$ times is equal to the number of partitions of $n$ into parts congruent to $\pm (2-i) \mod 5.$
\end{theorem}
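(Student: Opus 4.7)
The plan is to reduce the combinatorial statement to the analytic identities \eqref{eq:rr1} and \eqref{eq:rr2} already displayed, by showing that both sides have the same generating function in $q$. Fix $i \in \{0,1\}$, and let $A_i(n)$ denote the number of partitions of $n$ with consecutive parts differing by at least $2$ and with the part $1$ appearing at most $i$ times, and let $B_i(n)$ denote the number of partitions of $n$ into parts congruent to $\pm(2-i) \pmod 5$. It suffices to show that
\[
\sum_{n \geq 0} A_i(n)\, q^n = \sum_{n \geq 0} B_i(n)\, q^n
\]
for each $i$.

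First I would compute the product side, which is immediate: a partition into parts $\equiv \pm(2-i) \pmod 5$ has generating function
\[
\prod_{k \geq 0} \frac{1}{(1-q^{5k+(2-i)})(1-q^{5k+(3+i)})} = \frac{1}{(q^{2-i};q^5)_\infty (q^{3+i};q^5)_\infty},
\]
which matches the product side of \eqref{eq:rr1} when $i=1$ and of \eqref{eq:rr2} when $i=0$.

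The main step is the sum side, handled by the classical staircase decomposition. I would fix the number of parts $n$ and write a gap partition $\lambda_1 > \lambda_2 > \cdots > \lambda_n \geq 1$ with $\lambda_j - \lambda_{j+1} \geq 2$ as
\[
\lambda_j = \mu_j + (2(n-j)+1) + (1-i),
\]
where $\mu_1 \geq \mu_2 \geq \cdots \geq \mu_n \geq 0$ is an arbitrary partition into at most $n$ nonnegative parts; the summand $(1-i)$ enforces the smallest part condition $\lambda_n \geq 2-i$ coming from the restriction on how many $1$s appear. The subtracted staircase $1+3+\cdots+(2n-1) = n^2$ plus the extra shift $(1-i)n$ contributes $q^{n^2 + (1-i)n}$, while the free partition $\mu$ contributes $1/(q;q)_n$. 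Summing over $n$ gives
\[
\sum_{n \geq 0} A_i(n)\, q^n = \sum_{n \geq 0} \frac{q^{n^2 + (1-i)n}}{(q;q)_n},
\]
which is exactly the sum side of \eqref{eq:rr1} (resp.\ \eqref{eq:rr2}) for $i=1$ (resp.\ $i=0$).

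Once these two generating function identities are established, Theorem \ref{th:RRcomb} follows by comparing coefficients of $q^n$ on both sides of \eqref{eq:rr1} and \eqref{eq:rr2}. The only subtle point, and the one to treat carefully, is the bookkeeping in the staircase bijection to confirm that the restriction ``the part $1$ appears at most $i$ times'' translates precisely into the shift that produces the exponent $n^2 + (1-i)n$; everything else is routine manipulation of $q$-Pochhammer symbols.
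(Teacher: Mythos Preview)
Your argument is correct: the staircase bijection and the product interpretation are exactly the standard reduction of the partition statement to the analytic identities \eqref{eq:rr1}--\eqref{eq:rr2}, and your bookkeeping of the shift $(1-i)n$ is right.

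However, note that the paper does not actually give its own proof of Theorem~\ref{th:RRcomb}; the result is quoted as classical background, attributed to MacMahon and Schur, with no argument supplied. What the paper \emph{does} prove, in Section~\ref{sec:next}, is the analytic identities \eqref{eq:rr1}--\eqref{eq:rr2} themselves, via cylindric partitions with profiles $(3,0)$ and $(2,1)$ together with Theorems~\ref{th:Borodin} and~\ref{th:CW}. Your proposal takes \eqref{eq:rr1}--\eqref{eq:rr2} as given and supplies the combinatorial bridge, so the two pieces are complementary rather than comparable: combining the paper's cylindric-partition proof of the $q$-series identities with your generating-function interpretation yields a complete proof of Theorem~\ref{th:RRcomb}.
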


In addition to Combinatorics and Number Theory, the Rogers--Ramanujan are related to several other fields of mathematics such as representation theory of affine Lie algebras \cite{Lepowsky,Lepowsky2}, mathematical physics \cite{Baxter}, and algebraic geometry \cite{Mourtada}, to name only a few.
They were proved and generalised in many ways over the years, see for example \cite{AndrewsGordon,Andrews89,BressoudRR,Bressoud83,CorteelRSK,Garsiamilne,Gordon61,GOW16,Pascadi}.
The book of A. Sills give a good introduction to the subject \cite{sills2017invitation}.

Among these generalizations, an important one is due to Gordon \cite{Gordon61}, who embedded Theorem \ref{th:RRcomb} in an infinite family of combinatorial identities.
\begin{theorem}[Gordon's identities]
\label{th:Gordon}
Let $r$ and $i$ be integers such that $r\geq 2$ and $1\leq i \leq r.$ Let $S_{r,i}(n)$ be the number of partitions $\lambda=\lambda_1+\lambda_2+\cdots+\lambda_s$ of $n$ such that $\lambda_{j}-\lambda_{j+r-1} \geq 2$ for all $j$, and at most $i-1$ of the $\lambda_j$ are equal to $1$. Let $P_{r,i}(n)$ be the number of partitions of $n$ whose parts are not congruent to $0,\pm i \mod  2r+1$.

Then for every positive integer $n$, we have 
$$S_{r,i}(n)=P_{r,i}(n).$$
\end{theorem}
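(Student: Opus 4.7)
The plan is to establish the stronger generating function identity
$$\sum_{n \geq 0} S_{r,i}(n) q^n = \sum_{n \geq 0} P_{r,i}(n) q^n,$$
from which the theorem follows by comparing coefficients.

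First, I would introduce the bivariate refinement
$$J_{r,i}(x;q) := \sum_\lambda x^{\ell(\lambda)} q^{|\lambda|},$$
where $\lambda$ ranges over partitions counted by $S_{r,i}$ and $\ell(\lambda)$ denotes the number of parts; the target series is $J_{r,i}(1;q)$. Splitting off the partitions with exactly $i-1$ parts equal to $1$ and carefully accounting for the remaining parts (removing the ones and subtracting $1$ from every surviving part multiplies the generating function by a controlled factor) yields the system of $q$-difference equations
$$J_{r,i}(x;q) - J_{r,i-1}(x;q) = (xq)^{i-1}\, J_{r,r-i+1}(xq;q), \qquad 1 \leq i \leq r,$$
together with the boundary conditions $J_{r,0}(x;q) = 0$ and $J_{r,i}(0;q) = 1$. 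This system admits a unique formal power series solution in $x$, and iterating it produces the Andrews--Gordon multisum
$$J_{r,i}(1;q) = \sum_{n_1, \ldots, n_{r-1} \geq 0} \frac{q^{N_1^2 + \cdots + N_{r-1}^2 + N_i + \cdots + N_{r-1}}}{(q;q)_{n_1} \cdots (q;q)_{n_{r-1}}}, \qquad N_j := n_j + \cdots + n_{r-1}.$$

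Second, by Euler's product formula,
$$\sum_{n \geq 0} P_{r,i}(n) q^n = \frac{(q^i, q^{2r+1-i}, q^{2r+1}; q^{2r+1})_\infty}{(q;q)_\infty}.$$
To match the multisum with this infinite product, I would iterate Watson's $q$-analog of Whipple's transformation $r-2$ times, collapsing the $(r-1)$-fold sum to a well-poised bilateral series, and then invoke Jacobi's triple product identity to rewrite that bilateral series as the three theta factors appearing in the numerator.

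The hard part is unquestionably this final $q$-hypergeometric step. The combinatorial recurrences and the resulting multisum formula follow cleanly, but tracking parameters through the successive Watson transformations so that the surviving residues modulo $2r+1$ are exactly $\{0, i, -i\}$ demands considerable care. An alternative route, due to Bressoud, bypasses the analytic machinery by constructing an explicit bijection between the two partition sets via residues of parts modulo $r$; this replaces the $q$-hypergeometric manipulations with intricate combinatorial bookkeeping but leads to the same conclusion.
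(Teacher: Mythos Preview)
The paper does not actually prove this theorem: Gordon's identities are stated in the introduction as classical background, attributed to Gordon~\cite{Gordon61}, and no argument is given. So there is no ``paper's own proof'' to compare against; your proposal is being measured against the literature rather than against anything in this article.

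That said, your outline is the standard Andrews route and is essentially sound. The bivariate family $J_{r,i}(x;q)$ and the recurrence
\[
J_{r,i}(x;q)-J_{r,i-1}(x;q)=(xq)^{i-1}J_{r,r-i+1}(xq;q)
\]
are exactly Andrews' functional equations; together with the stated boundary data they pin down the $(r-1)$-fold multisum you wrote. On the product side, identifying $\sum P_{r,i}(n)q^n$ with $(q^i,q^{2r+1-i},q^{2r+1};q^{2r+1})_\infty/(q;q)_\infty$ is immediate, and the passage from the multisum to this product via iterated Watson $q$-Whipple transformations plus the Jacobi triple product is precisely Andrews' original analytic proof of Theorem~\ref{th:AGseries}. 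One small caveat: you describe the bookkeeping in the recurrence somewhat loosely (``subtracting $1$ from every surviving part''); in the actual argument one must also track how many parts equal $2$ after the shift, which is where the index flip $i\mapsto r-i+1$ on the right-hand side comes from. If you write this up in full you will need to make that step explicit, but the overall strategy is correct.
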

The Rogers--Ramanujan identities correspond to the cases $r=i=2$ and $r=2$, $i=1$.

Andrews later extended Gordon's identities in a $q$-series form~\cite{AndrewsGordon}, which can be stated as follows.
\begin{theorem}[Andrews--Gordon identities]
\label{th:AGseries}
Let $r \geq 2$ and $1 \leq i \leq r$ be two integers. We have
\begin{equation}\label{eq:AGri}
\sum_{n_1\geq\dots\geq n_{r-1}\geq0}\frac{q^{n_1^2+\dots+n_{r-1}^2+n_{i}+\dots+n_{r-1}}}{(q)_{n_1}} {n_1\brack n_1-n_2}_q \cdots {n_{r-2}\brack n_{r-2}-n_{r-1}}_q=\frac{(q^{2r+1},q^{i},q^{2r-i+1};q^{2r+1})_\infty}{(q)_\infty},
\end{equation}
where
$${m+n \brack m}_q := \left\lbrace \begin{array}{ll}\frac{(q;q)_{m+n}}{(q;q)_m(q;q)_{n}},&\text{for }m, n \geq 0,\\ 0,&\text{otherwise,}\end{array}\right.$$
is the classical $q$-binomial coefficient.
\end{theorem}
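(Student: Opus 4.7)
The plan is to prove the Andrews--Gordon identity by showing that both sides are generating functions for the two statistics counted by Gordon's theorem, thereby reducing the identity to Theorem \ref{th:Gordon}.

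For the right-hand side, I would simply expand the $q$-Pochhammer symbols according to their definition to obtain
\[
\frac{(q^{2r+1},q^{i},q^{2r-i+1};q^{2r+1})_\infty}{(q)_\infty} = \prod_{\substack{n \geq 1 \\ n \not\equiv 0, \pm i \pmod{2r+1}}} \frac{1}{1-q^n},
\]
which is immediately recognised as the generating function for partitions counted by $P_{r,i}(n)$.

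For the left-hand side, the key step is to decompose a partition $\lambda$ counted by $S_{r,i}(n)$ via a system of successive Durfee squares and Durfee rectangles. Following Andrews's original argument, one lets $n_j$ be the size of the $j$-th successive Durfee square for $j=1,\ldots,r-1$, which is upgraded to an $n_j \times (n_j+1)$ Durfee rectangle whenever $j \geq i$ in order to encode the condition that at most $i-1$ parts equal $1$. Under this dissection, $q^{n_j^2}$ records the $j$-th Durfee square, the additional factor $q^{n_j}$ for $j \geq i$ records the extra column turning the square into a rectangle, the $q$-binomial ${n_{j-1} \brack n_{j-1}-n_j}_q$ records a partition fitting into an $n_j \times (n_{j-1}-n_j)$ rectangle sitting to the right of the $j$-th Durfee square, and $1/(q)_{n_1}$ records the partition sitting below the first Durfee square. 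Summing over all admissible $(n_1 \geq \cdots \geq n_{r-1} \geq 0)$ then realises the LHS as the generating function of $S_{r,i}(n)$, and Theorem \ref{th:Gordon} identifies it with the RHS.

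The main obstacle is the careful combinatorial bookkeeping in the Durfee dissection --- one must check that the Gordon difference conditions $\lambda_j - \lambda_{j+r-1} \geq 2$ translate precisely into the chain of inequalities $n_1 \geq n_2 \geq \cdots \geq n_{r-1} \geq 0$, and that the $q$-binomial factors correctly enumerate the partitions fitting in each rectangle with no further constraint beyond the rectangle dimensions. An alternative, purely analytic route would iterate Bailey's lemma $r-1$ times starting from the unit Bailey pair; this bypasses combinatorics entirely but requires setting up the Bailey chain machinery.
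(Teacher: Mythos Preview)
The paper does not prove Theorem~\ref{th:AGseries}; it is quoted as a classical result of Andrews and serves only as historical background for the new identities in Section~\ref{sec:proof}. There is therefore no proof in the paper to compare your proposal against.

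On the merits of the proposal itself, the main combinatorial route has a genuine gap. The successive Durfee-square dissection does show that the left-hand side of~\eqref{eq:AGri} is the generating function for partitions admitting at most $r-1$ successive Durfee squares (with the last $r-i$ of them enlarged to $n_j\times(n_j+1)$ rectangles). But this class of partitions is \emph{not} the class $S_{r,i}$ of Theorem~\ref{th:Gordon}: a partition with at most $r-1$ successive Durfee squares need not satisfy $\lambda_j-\lambda_{j+r-1}\ge 2$, and conversely. The two families are merely equinumerous, and establishing that equinumerosity is not ``careful bookkeeping'' --- it is a theorem of essentially the same depth as~\eqref{eq:AGri} itself (a direct bijection was first given by Bressoud, well after Andrews's analytic proof). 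So the assertion that ``the Gordon difference conditions translate precisely into the chain $n_1\ge\dots\ge n_{r-1}$'' is false as stated, and invoking Theorem~\ref{th:Gordon} at the end becomes circular unless you supply that bijection independently. Also, a minor slip: in the Durfee picture the factor $1/(q)_{n_1}$ accounts for the partition to the \emph{right} of the first Durfee square, since the region \emph{below} is what gets further dissected into the subsequent Durfee squares.

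Your alternative route via iteration of the Bailey lemma from the unit Bailey pair is sound and is the standard modern proof; it is also much closer in spirit to Andrews's original (analytic) argument in~\cite{AndrewsGordon}.
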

Note that~\eqref{eq:rr1} (resp.~\eqref{eq:rr2}) is the particular case of~\eqref{eq:AGri} where $r=i=2$ (resp. $r=2$ and  $i=1$).

\medskip
The Andrews-Gordon identities can be proved using the powerful machinery of Bailey pairs, first introduced by Bailey \cite{Bailey} and later developed by Andrews  \cite{AndrewsBailey}. All the identities provable by using classical Bailey pairs are related to certain characters of the Lie algebra $A_1$ \cite{AndrewsSchillingWarnaar}, and are therefore called $A_1$ Rogers--Ramanujan identities. In the beginning of the 1990s, Milne and Lilly \cite{MilneLilly1,MilneLilly2} extended the Bailey lemma to the type $A_{n-1}$ for general $n$. However, even though their work led to many $q$-series identities, they did not discover any $A_{n-1}$ Rogers--Ramanujan identities.

\medskip
In 1999, Andrews, Schilling and Warnaar \cite{AndrewsSchillingWarnaar} were able to describe an $A_2$ Bailey lemma and Bailey lattice. This allowed them to prove three $A_2$ Rogers--Ramanujan identities, one of which is the following:
\begin{theorem}[Andrews--Schilling--Warnaar]
\label{th:ASW}
$$\sum_{n_1,n_2\geq 0} \frac{q^{n_1^2+n_2^2-n_1n_2}}{(q;q)_{n_1}} {2n_1+1\brack n_2}_q=\frac{1}{(q,q,q^3,q^4,q^6,q^6;q^7)_\infty}.$$
\end{theorem}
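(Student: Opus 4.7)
The plan is to follow the $A_2$ Bailey chain machinery developed in the Andrews--Schilling--Warnaar paper. Recall that an $A_2$ Bailey pair is a pair of sequences $(\alpha_n,\beta_n)$ related by a multivariate analogue of the classical Bailey relation, and the $A_2$ Bailey lemma transforms such a pair into a new one, inserting weights that involve precisely the $A_2$ Cartan quadratic form $n_1^2+n_2^2-n_1n_2$ together with a $q$-binomial factor. The exponent and the factor ${2n_1+1\brack n_2}_q$ on the left-hand side of the identity are exactly what one such step produces, so the identity should fall out of applying the $A_2$ Bailey lemma once to a simple seed Bailey pair.

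Concretely, I would proceed in three steps. First, select the seed by matching shapes: strip the Bailey-lemma weights from the summand and identify what remains as the $\beta$-side of an elementary Bailey pair, most likely a unit or near-unit pair. Second, apply the $A_2$ Bailey lemma once and specialise the free parameters — typically sending certain variables to $0$ or $\infty$ so as to kill finite factors on the product side. Third, manipulate the resulting infinite product via Jacobi's triple product and the $A_2^{(1)}$ Macdonald (Weyl--Kac denominator) identity at level $4$ to rewrite it in the modulus-$7$ form $\frac{1}{(q,q,q^3,q^4,q^6,q^6;q^7)_\infty}$ claimed in the statement.

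The main obstacle is the $A_2$ Bailey lemma itself. Unlike the classical $A_1$ Bailey lemma, which follows from a single application of Jacobi's triple product, the $A_2$ version rests on a genuinely multivariate $q$-hypergeometric transformation that is considerably harder to establish; setting up this machinery is the technical heart of the Andrews--Schilling--Warnaar paper. Once it is granted, the remaining steps are essentially mechanical. As an alternative, and in the spirit of the present paper, one could try to realise the sum as a generating function for cylindric partitions with profile summing to $4$, so that Borodin's product formula immediately delivers the right-hand side; the nontrivial step would then be to derive the particular quadratic $q$-series on the left directly from a recurrence on such cylindric partitions.
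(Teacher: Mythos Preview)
The paper does not actually give its own proof of this theorem. Theorem~\ref{th:ASW} is stated as background, attributed to Andrews--Schilling--Warnaar, and the surrounding text simply reports that it was originally proved via the $A_2$ Bailey lemma and lattice in \cite{AndrewsSchillingWarnaar}, later reproved by Warnaar using Hall--Littlewood identities, and again by Corteel--Welsh via cylindric partitions with $k=3$, $\ell=4$. There is therefore no in-paper proof to compare your proposal against.

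That said, your primary outline---seed $A_2$ Bailey pair, one application of the $A_2$ Bailey lemma, then product rearrangement via the Macdonald/Weyl--Kac denominator identity---is precisely the route of the original Andrews--Schilling--Warnaar argument you are citing, so in that sense your plan is faithful to the source the paper invokes. Your alternative suggestion, realising the sum as a cylindric-partition generating function for a profile with $c_1+c_2+c_3=4$ and reading off the product from Borodin's formula, is exactly the Corteel--Welsh reproof that the present paper mentions and whose method it extends to $c_1+c_2+c_3=5$. So both of your proposed routes are known and correct; the first matches the cited original proof, and the second matches the methodology of this paper, but neither is carried out here.
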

Shortly after, Warnaar \cite{warnaar2006hall} showed how to prove these identities using Hall Littlewood identities.

In their paper, Andrews, Schilling and Warnaar found several very general families of sum-product identities (Theorems 5.1, 5.3, and 5.4 of \cite{AndrewsSchillingWarnaar}). They are quite intricate to state, so we will not do it in full generality here. However, let us note the important fact that, after multiplication by $(q;q)_{\infty}$, the product side of these identities can be interpreted in terms of characters of the $W_3$ algebra. However, in general, after multiplication by $(q;q)_{\infty}$, the sum-side does not have obviously positive coefficients anymore, even though we know the coefficients are positive because of the character.
So far, the three $A_2$ Rogers--Ramanujan identities of Andrews, Schilling, and Warnaar (Theorem \ref{th:ASW} and two other identities of the same shape) and a fourth identity of the same family of Corteel and Welsh \cite{CorteelWelsh} were the only cases in which the character could be expressed as a sum with obviously positive coefficients.

For example, the particular case $k=3$, $i=1$ in Theorem 5.3 of \cite{AndrewsSchillingWarnaar} can be stated as follows.
\begin{theorem}[Andrews--Schilling--Warnaar]
\label{th:ASW500}
$$(q,q)_{\infty} \sum_{a_1,b_1,a_2,b_2\in\Z} \frac{q^{a_1^2  + b_1^2 + a_2^2  + b_2^2 - a_1 b_1 + a_2 b_2 + a_1 + a_2 + b_1 + b_2}}{(q;q)_{a_1-a_2}(q;q)_{b_1-b_2}(q;q)_{a_2}(q;q)_{b_2}(q;q)_{a_2+b_2+1}} =\frac{1}{(q^2,q^3,q^3,q^4,q^4,q^5,q^5,q^6;q^8)_\infty}$$
\end{theorem}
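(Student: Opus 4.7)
The plan is to recognise both sides as the generating function for cylindric partitions of rank $n = 3$ and level $\ell = 5$ with a suitable profile $(c_1, c_2, c_3)$ satisfying $c_1+c_2+c_3 = 5$, and to match them via Borodin's product formula on one side and a system of $q$-difference equations on the other, in the spirit of Corteel--Welsh.

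First, I would identify the profile. Borodin's theorem expresses the cylindric partition generating function as a product with modulus $T := n+\ell = 8$, containing a factor $(q^T;q^T)_\infty^{-1}$ together with additional factors determined by the profile. Since multiplying the right-hand side by $(q;q)_\infty$ yields $(q;q)_\infty / (q^2,q^3,q^3,q^4,q^4,q^5,q^5,q^6;q^8)_\infty$, the first task is to tabulate Borodin's product for each of the seven cyclic classes of profile summing to $5$ (representatives $(5,0,0)$, $(4,1,0)$, $(4,0,1)$, $(3,2,0)$, $(3,0,2)$, $(3,1,1)$, $(2,2,1)$) and to pick out the one whose multiplicities modulo $8$ match.

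Second, I would derive the sum from a combinatorial decomposition. The paper establishes, for each profile, a system of $q$-difference equations relating the generating functions of different profiles obtained by peeling off the maximal part of the top row of the cylindric diagram. Iterating the system and applying the $q$-Chu--Vandermonde summation at each step produces a multi-sum expression; starting from a profile of level $5$ one expects a quadruple sum. The change of variables $m_1 = a_1-a_2$, $m_2 = b_1-b_2$, $m_3 = a_2$, $m_4 = b_2$ rewrites the left-hand sum over nonnegative integers, turning the denominator into $(q;q)_{m_1}(q;q)_{m_2}(q;q)_{m_3}(q;q)_{m_4}(q;q)_{m_3+m_4+1}$ and the exponent into a cleaner quadratic form; this reparametrised expression is what one should match against the output of the iteration.

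The hardest part will be the second step. Different orderings of iteration and different choices among cyclically equivalent profiles yield equivalent but superficially very different multi-sum forms, so obtaining precisely the quadruple sum of the statement requires a carefully chosen path through the system, possibly together with reindexing identities. Alternatively, one can appeal to the original Andrews--Schilling--Warnaar proof via the $A_2$ Bailey chain: two applications of the $A_2$ Bailey lemma starting from the unit pair produce a quadruple sum of precisely this shape, and the product side follows from an $A_2$ Macdonald-type identity. That route is more direct but does not expose the cylindric partition interpretation that the rest of the paper exploits.
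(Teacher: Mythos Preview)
The paper does not actually prove this theorem: it is quoted as the case $k=3$, $i=1$ of Theorem~5.3 in Andrews--Schilling--Warnaar, and its proof lives in that reference via the $A_2$ Bailey lemma. So your closing remark---that the $A_2$ Bailey chain gives this identity directly---is in fact the route the original authors take, and is the only proof on record.

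Your main proposal, to obtain the specific Andrews--Schilling--Warnaar quadruple sum from the Corteel--Welsh $q$-difference system for cylindric partitions of profile summing to $5$, does not work as stated. The product side is indeed $F_{(5,0,0)}(1,q)/(q;q)_\infty$ by Borodin, and the $q$-difference system does produce a quadruple sum for $G_{(5,0,0)}(1,q)$; but the sum that emerges is the one in the paper's Theorem~\ref{th:main}, namely
\[
\sum_{n_1,n_2,n_3,n_4\ge 0}\frac{q^{n_1^2+n_2^2+n_3^2+n_4^2+n_1+n_2+n_3+n_4-n_1n_2+n_2n_4}}{(q;q)_{n_1}}{n_1\brack n_2}_q{n_1\brack n_4}_q{n_2\brack n_3}_q,
\]
which has a visibly different structure from the ASW sum (no factor $(q;q)_\infty$, a single $(q;q)_{n_1}$ in the denominator, $q$-binomial coefficients rather than five Pochhammers). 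Your reparametrisation $m_1=a_1-a_2$, $m_2=b_1-b_2$, $m_3=a_2$, $m_4=b_2$ does make the ASW sum over nonnegative integers, but the resulting expression still carries $(q;q)_\infty/(q;q)_{m_3+m_4+1}$ and does not match what the iteration produces. The paper says explicitly in its concluding section that equating its sums with the ASW sums by a direct $q$-series argument is an open problem; so ``a carefully chosen path through the system'' will not close the gap with present tools.
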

The product on the right represents a character, but the $(q,q)_{\infty}$ prevents us from interpreting the left-hand side as a sum with positive coefficients.

\medskip

In 2016, Foda and Welsh \cite{FodaWelsh} had the brilliant idea to prove the Andrews--Gordon identities and the Bressoud identities using the combinatorics of cylindric partitions. We will define these objects in Section 2.
Soon after, Corteel \cite{CorteelRSK} gave a combinatorial proof of the Rogers--Ramanujan identities using cylindric partitions and the Robinson--Schensted--Knuth correspondence.
Corteel and Welsh in \cite{CorteelWelsh} built on these ideas to give a short proof of Andrews, Schilling and Warnaar's three $A_2$ Rogers--Ramanujan identities and their fourth one, using the combinatorics of cylindric partitions as well.

\medskip
Our goal in this paper is to continue these lines of work by presenting a new family of $A_2$ Rogers--Ramanujan identities using cylindric partitions. These new identities also give an expression of Andrews, Schilling and Warnaar's products related to characters as sums with obviously positive coefficients.

We prove simultaneously seven new identities. The four simpler ones are stated below, while the three more complicated ones can be found in Theorem \ref{th:main}.
\begin{theorem}
We have
\label{th:mainintro}
\begin{align*}
&\sum_{n_1,n_2,n_3,n_4\geq 0 } \frac{q^{n_1^2+n_2^2+n_3^2+n_4^2+n_1+n_2+n_3+n_4-n_1n_2 + n_2n_4}}{(q;q)_{n_1}} {n_1\brack n_2}_q{n_1\brack n_4}_q{n_2 \brack n_3}_q=\frac{1}{(q^2,q^3,q^3,q^4,q^4,q^5,q^5,q^6;q^8)_\infty}, \nonumber \\
&\sum_{n_1,n_2,n_3,n_4\geq 0 } \frac{q^{n_1^2+n_2^2+n_3^2+n_4^2+n_2+n_3+n_4-n_1n_2 + n_2n_4}}{(q;q)_{n_1}} {n_1\brack n_2}_q{n_1\brack n_4}_q{n_2 \brack n_3}_q
= \frac{1}{(q,q^2,q^3,q^4,q^4,q^5,q^6,q^7;q^8)_\infty} , \nonumber \\
& \sum_{n_1,n_2,n_3,n_4\geq 0 } \frac{q^{n_1^2+n_2^2+n_3^2+n_4^2+n_3-n_1n_2 + n_2n_4}}{(q;q)_{n_1}} {n_1\brack n_2}_q{n_1\brack n_4}_q{n_2 \brack n_3}_q
= \frac{1}{(q,q,q^3,q^3,q^5,q^5,q^7,q^7;q^8)_\infty}, \nonumber \\
 &\sum_{n_1,n_2,n_3,n_4\geq 0 } \frac{q^{n_1^2+n_2^2+n_3^2+n_4^2-n_1n_2 + n_2n_4}}{(q;q)_{n_1}} {n_1\brack n_2}_q{n_1\brack n_4}_q{n_2 \brack n_3}_q 
= \frac{1}{(q,q,q^2,q^4,q^4,q^6,q^7,q^7;q^8)_\infty}. \nonumber\\
\end{align*}
\end{theorem}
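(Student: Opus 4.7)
The plan is to extend the Corteel--Welsh approach \cite{CorteelWelsh} from level $k=3$ to level $k=5$. Partitions of $5$ into three ordered nonnegative parts, taken up to cyclic rotation, give exactly seven profiles $(c_1,c_2,c_3)$, which accounts for the seven identities advertised in the paper. First I would recall Borodin's product formula, which expresses the generating function $F_{(c_1,c_2,c_3)}(q)$ of cylindric partitions with profile $(c_1,c_2,c_3)$ as an infinite product built from factors $1/(q^a;q^{k+3})_\infty$ divided by $(q;q)_\infty$; after clearing $(q;q)_\infty$, this matches an $A_2^{(1)}$ character at level $5$ (modulus $k+3=8$), and inspection identifies each of the four right-hand sides above with one such profile.

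Next I would set up the Corteel--Welsh system of functional equations. Peeling off the largest part of a cylindric partition and tracking which row it sits in yields a linear $q$-difference system on the seven series $F_{(c_1,c_2,c_3)}(q)$. Combined with the initial condition $F_{(c_1,c_2,c_3)}(0)=1$, this $7\times 7$ system characterises the seven generating functions uniquely, and Borodin's formula verifies the product sides as one of its solutions.

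The heart of the proof is to propose, for each profile, a quadruple-sum ansatz of the type displayed above: the common quadratic form $n_1^2+n_2^2+n_3^2+n_4^2-n_1n_2+n_2n_4$, profile-dependent linear shifts, the weight $1/(q;q)_{n_1}$, and the product of three $q$-binomials ${n_1\brack n_2}_q{n_1\brack n_4}_q{n_2\brack n_3}_q$. Substituting these seven candidates into the Corteel--Welsh system and verifying equality should reduce, via the $q$-Pascal rules, the recurrences for $1/(q;q)_n$, and a handful of $q$-Chu--Vandermonde type manipulations, to a finite family of algebraic identities between quadruple $q$-sums. By uniqueness, matching the functional equations and the constant term then implies the four product formulas (and the three companions in Theorem~\ref{th:main}).

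The hard part will be finding the correct ansatz. The cross-terms $-n_1n_2+n_2n_4$ indicate that the four summation variables sit on a specific small tree rather than the straight path one sees in \eqref{eq:AGri}, so the quadratic form and the $q$-binomial pattern cannot be read off the Andrews--Schilling--Warnaar identity of Theorem~\ref{th:ASW500} directly; this step is likely guided by computer experimentation, as in \cite{CorteelWelsh}, with the profile-dependent linear corrections chosen so that the seven series solve the same $7\times 7$ recurrence. Once the ansatz is fixed, verifying that it solves the system is mechanical but lengthy, since level $5$ produces seven coupled equations in contrast to the four that sufficed at level $3$.
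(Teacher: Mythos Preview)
Your overall framework matches the paper's: Borodin's formula supplies the product sides, the Corteel--Welsh $q$-difference system on the seven profiles with $c_1+c_2+c_3=5$ characterises the $G_c(z,q)$ uniquely, and one then verifies that a guessed family of quadruple sums solves this system and matches the initial conditions.

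Where you and the paper diverge is the verification step, and you underestimate it. You expect that substituting the seven ans\"atze into the coupled system will collapse, via $q$-Pascal and a ``handful of $q$-Chu--Vandermonde type manipulations'', to checkable identities. The authors did not find this direct route tractable. Instead they pass to the $z^k$-coefficient recurrences, \emph{uncouple} the system by Gr\"obner-basis elimination (using Koutschan's \texttt{HolonomicFunctions} through Ablinger--Uncu's \texttt{qFunctions}), and obtain for each of the profiles $(5,0,0)$, $(4,1,0)$, $(3,1,1)$, $(2,2,1)$ a single order-$4$ recurrence with very large polynomial coefficients; they then run creative telescoping on the proposed quadruple sums to certify that those coefficients satisfy the same recurrence. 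The paper in fact displays one of these uncoupled recurrences in full precisely to argue that symbolic computation is ``a necessity'' here, so the by-hand verification you envision is unlikely to be the ``mechanical but lengthy'' exercise you describe. A further wrinkle you gloss over: the three companions with extra polynomial factors (profiles $(4,0,1)$, $(3,0,2)$, $(3,2,0)$) are not handled by the same machinery, because creative telescoping does not apply cleanly to their non-hypergeometric summands; the paper instead derives them \emph{a posteriori} from the four simple identities by iterating the $q$-difference equations \eqref{eq:qdiff500}--\eqref{eq:qdiff221}.
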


The first equation of our theorem gives an expression of the product in Theorem \ref{th:ASW500} as a sum with obviously positive coefficients, and we give a combinatorial interpretation in terms of cylindric partitions. More generally, our full main result, Theorem \ref{th:main}, gives a positive sum expression for all the products in the case $k=3$ in Theorem 5.3 of Andrews, Schilling, and Warnaar's paper \cite{AndrewsSchillingWarnaar}.

The two challenges in proving Theorem \ref{th:main} were the following: first, we ``guessed" the sum-side by guessing the generating
functions for certain cylindric partitions with bounded entries. Second, we proved these identities simultaneously by using a result of Corteel and Welsh (see Theorem \ref{th:CW}). As it is quite intricate to handle quadruple sums,
we used a recent computer algebra implementation of Ablinger and Uncu \cite{qFunctions} and we give an automated proof of the main result.

\medskip

This paper is organized as follows. In Section \ref{sec:next} we introduce cylindric partitions and the methodology.
To give a simple example on how to use cylindric partitions to prove partitions identities, we prove the classical Rogers--Ramanujan identities. In Section \ref{sec:proof}, we prove Theorem \ref{th:main}.

\subsection*{Acknowledgements} JD wants to thank the hospitality of the department of Mathematics 
at UC Berkeley where some of the results were proven. JD is partially funded by the ANR COMBIN\'e
ANR-19-CE48-0011. The research of AU is supported by the Austrian Science Fund (FWF) SFB50-11 Project.
The authors thank Ole Warnaar for his interest and remarks.

\section{Introduction and methodology}
\label{sec:next}

Cylindric partitions were introduced by Gessel and Krattenthaler \cite{GesselKrattenthaler}. They are defined as follows.

\begin{definition}\label{def:cylin} Let $k$ and $\ell$ be positive integers. Let $c=(c_1,c_2,\dots, c_k)$ be a composition, where $c_1+c_2+\dots+c_k=\ell$. A \emph{cylindric partition with profile $c$} is a vector partition $\Lambda = (\lambda^{(1)},\lambda^{(2)},\dots,\lambda^{(k)})$, where each $\lambda^{(i)} = \lambda^{(i)}_1+\lambda^{(i)}_2 + \cdots +\lambda^{(i)}_{s_i}$ is a partition, such that for all $i$ and $j$,
$$\lambda^{(i)}_j\geq \lambda^{(i+1)}_{j+c_{i+1}} \quad \text{and} \quad \lambda^{(k)}_{j}\geq\lambda^{(1)}_{j+c_1}.$$
\end{definition}

We define the size $|\Lambda|$ of a cylindric partition $\Lambda = (\lambda^{(1)},\lambda^{(2)},\dots,\lambda^{(k)})$ to be the sum of all the parts in the partitions $\lambda^{(1)},\lambda^{(2)},\dots,\lambda^{(k)}$.
We also define the largest part of a cylindric partition $\Lambda$ to be the maximum part among all the partitions in $\Lambda$, and denote it by $\max(\Lambda)$. 
We now define the bivariate generating function for cylindric partitions 
$$F_c(z,q):=\sum_{\Lambda\in \mathcal{P}_c} z^{\max{(\Lambda)}}q^{|\Lambda |},$$
where $\mathcal{P}_c$ denotes the set of all cylindric partitions with profile $c$. 

In 2007, Borodin \cite{Borodin} showed that when one sets $z=1$ in this bivariate generating function, it becomes a beautiful infinite product. 

\begin{theorem}[Borodin, 2007]
\label{th:Borodin}
Let $k$ and $\ell$ be positive integers, and let $c=(c_1,c_2,\dots,c_k)$ be a composition of $\ell$. Define $t:=k+\ell$ and $s(i,j) := c_i+c_{i+1}+\dots+ c_j$. Then,
\begin{equation}
\label{BorodinProd}
F_c(1,q) = \frac{1}{(q^t;q^t)_\infty} \prod_{i=1}^k \prod_{j=i}^k \prod_{m=1}^{c_i} \frac{1}{(q^{m+j-i+s(i+1,j)};q^t)_\infty} \prod_{i=2}^k \prod_{j=2}^i \prod_{m=1}^{c_i} \frac{1}{(q^{t-m+j-i-s(j,i-1)};q^t)_\infty}.
\end{equation}
\end{theorem}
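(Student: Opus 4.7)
The plan is to follow Borodin's original strategy and express $F_c(1,q)$ as a trace of vertex operators on the bosonic Fock space. This is a standard technique for periodic/cylindrical combinatorial objects (compare the Okounkov--Reshetikhin treatment of skew plane partitions) and is well suited to the cyclic nature of the cylindric condition.

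First, I would encode a cylindric partition $\Lambda=(\lambda^{(1)},\dots,\lambda^{(k)})$ with profile $c$ by its diagonal slices: reading the entries of $\Lambda$ along lines of a fixed slope produces a cyclic sequence of ordinary partitions $\mu^{(0)},\mu^{(1)},\ldots,\mu^{(t-1)}$ with consecutive pairs related by horizontal interlacing, either $\mu^{(i)}\succ\mu^{(i+1)}$ or $\mu^{(i)}\prec\mu^{(i+1)}$. The pattern of interlacings is dictated by the profile: each $c_i$ produces a run of $c_i$ interlacings of one type followed by a single interlacing of the other type, for a total of $k$ of one kind and $\ell$ of the other. One checks that this defines a bijection between cylindric partitions of profile $c$ and cyclic interlacing sequences of the prescribed shape, and that $|\Lambda|=\sum_i|\mu^{(i)}|$.

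Next, I would translate this description into the operator formalism. Let $\Gamma_\pm(z)$ be the standard half-vertex operators on Fock space, with
\begin{equation*}
\langle\mu\mid\Gamma_+(z)\mid\lambda\rangle = z^{|\lambda|-|\mu|}\,\mathbf{1}_{\mu\prec\lambda}, \qquad \langle\lambda\mid\Gamma_-(z)\mid\mu\rangle = z^{|\lambda|-|\mu|}\,\mathbf{1}_{\mu\prec\lambda},
\end{equation*}
and the fundamental commutation relation $\Gamma_+(x)\Gamma_-(y)=(1-xy)^{-1}\Gamma_-(y)\Gamma_+(x)$. Writing $Q$ for the grading operator $Q\lvert\mu\rangle=q^{|\mu|}\lvert\mu\rangle$, the slice bijection gives
\begin{equation*}
F_c(1,q) = \mathrm{Tr}\,\Bigl(Q\prod_{i=1}^{t}\Gamma_{\varepsilon_i}(1)\Bigr),
\end{equation*}
where the signs $\varepsilon_i\in\{+,-\}$ spell out the interlacing pattern determined by $c$. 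I would then normal-order the product by moving every $\Gamma_+$ past every $\Gamma_-$ on its left using the commutation rule. Each swap contributes one scalar $(1-q^{*})^{-1}$, whose exponent records how many times that $\Gamma_+$ is carried through $Q$ as it traverses the cyclic word. The normal-ordered remainder $\mathrm{Tr}(Q\,\Gamma_-\cdots\Gamma_-\Gamma_+\cdots\Gamma_+)$ then reduces, via the classical free-boson computation, to $(q^t;q^t)_\infty^{-1}$ times a further product coming from moving the surviving $\Gamma_+$'s repeatedly around the cycle through $Q$.

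The main obstacle, and what makes Borodin's statement look intricate, is the bookkeeping of exponents rather than any deep identity: one must match each triple $(i,j,m)$ appearing in the two triple products on the right of \eqref{BorodinProd} with a specific commutation or cycle-crossing on the left, and verify that $m+j-i+s(i+1,j)$ and $t-m+j-i-s(j,i-1)$ parameterize the two types of such steps. Once this dictionary is set up, the operator computation is routine and the product formula falls out.
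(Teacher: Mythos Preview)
The paper does not prove Theorem~\ref{th:Borodin}; it is quoted as a result of Borodin \cite{Borodin} and used as a black box throughout (for example, to obtain Corollary~\ref{th:product_side} and the product sides of the warm-up Rogers--Ramanujan computation). So there is no ``paper's own proof'' to compare against.

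Your sketch is a reasonable outline of the standard vertex-operator/transfer-matrix argument, essentially Borodin's original approach (and close in spirit to Okounkov--Reshetikhin). Two small points to tighten before it becomes a proof. First, the trace you wrote, $\mathrm{Tr}\bigl(Q\prod_i\Gamma_{\varepsilon_i}(1)\bigr)$, inserts the energy operator only once, which weights only a single diagonal slice; to recover $q^{|\Lambda|}=q^{\sum_i|\mu^{(i)}|}$ one needs a copy of $Q$ between every pair of consecutive $\Gamma$'s (equivalently, one commutes all the $Q$'s to one side and works with $\Gamma_{\varepsilon_i}(q^{a_i})$ for suitable exponents $a_i$, with a single $Q^t$ driving the cyclic trace). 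Second, the claim that the profile yields exactly $k$ interlacings of one sign and $\ell$ of the other, in blocks of lengths $c_1,1,c_2,1,\dots$, deserves a sentence of justification from Definition~\ref{def:cylin}. With these fixed, the commutation/normal-ordering computation is indeed routine and the matching of exponents with the indices $(i,j,m)$ in \eqref{BorodinProd} is, as you say, pure bookkeeping.
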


In other words, if we denote by $\mu$ the partition $s(1,1)+s(1,2)+\cdots + s(1,k)$, and by $\mu^c$ its complement inside the $k \times s(1,k)$ rectangle, we have 
$$F_c(1,q) = \frac{1}{(q^t;q^t)_\infty} \prod_{\square \in \mu} \frac{1}{(q^{h(\square)};q^t)_\infty} \prod_{\square \in \mu^c} \frac{1}{(q^{t-h(\square)};q^t)_\infty},$$
where $h(\square)$ denotes the hook length of the box $\square$, see Figure \ref{fig:borodin}.

\begin{figure}[h]
\includegraphics[width=0.5\textwidth]{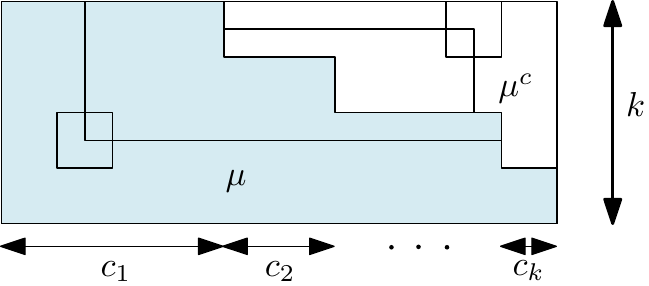}
\caption{The Borodin formula in terms of hooks}
\label{fig:borodin}
\end{figure}

Theorem \ref{th:Borodin} is very useful for computing generating functions for cylindric partitions. For example, using the hooks given in Figure \ref{fig:borodin311}, we can easily obtain that the generating function for cylindric partitions with profile $(3,1,1)$ is
$$F_{(3,1,1)}(1,q) = \frac{1}{(q;q)_{\infty}} \times \frac{1}{(q,q,q^3,q^3,q^5,q^5,q^7,q^7;q^8)_\infty}.$$

\begin{figure}[h]
\includegraphics[width=0.2\textwidth]{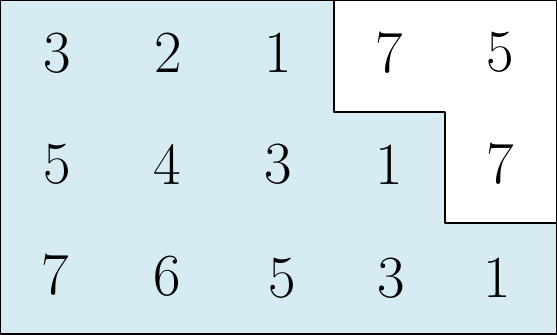}
\caption{The Borodin formula for profile $(3,1,1)$}
\label{fig:borodin311}
\end{figure}

Note that using the definition of cylindric partitions it is trivial to check that
$$F_c(z,q)=F_{S(c)}(z,q)$$ where $S(c)=(c_2,\ldots ,c_k,c_1)$.
In the case $k=3$, we have an extra identity.
\begin{corollary}
For any positive integers $c_1,c_2,c_3$, we have
$$
F_{(c_1,c_2,c_3)}(1,q)=F_{(c_2,c_1,c_3)}(1,q).
$$
\end{corollary}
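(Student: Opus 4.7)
The plan is to apply Borodin's product formula (Theorem~\ref{th:Borodin}) to both sides and verify that the resulting infinite product is symmetric in all of $c_1, c_2, c_3$, not merely cyclically. Since the cyclic identity $F_{(c_1,c_2,c_3)}(z,q) = F_{(c_2,c_3,c_1)}(z,q)$ is already available, it suffices to prove invariance under the single transposition $c_1 \leftrightarrow c_2$, as cyclic rotations together with any transposition generate $\mathfrak{S}_3$. The modulus $t = k + \ell$ is already symmetric in the $c_i$, so the work lies in comparing the multisets of exponents appearing in the numerators.

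Specializing Borodin's formula to $k = 3$, the nine intervals of exponents organize naturally into three groups of three. The \emph{small} intervals are $\{1, 2, \ldots, c_i\}$ for $i = 1, 2, 3$, manifestly symmetric in the $c_i$. The \emph{large} intervals, after one combines the contributions coming from the two products in~\eqref{BorodinProd} and uses the identity $c_j + c_k = \ell - c_i$, reduce to $\{\ell - c_i + 3, \ldots, \ell + 2\}$ for $i = 1, 2, 3$, again manifestly symmetric. The entire argument therefore reduces to the three \emph{middle} intervals $I(c_{i+1}, c_i) := \{c_{i+1} + 2, \ldots, c_{i+1} + c_i + 1\}$, with indices taken cyclically modulo $3$.

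The main step, which I expect to be the only nontrivial part of the verification, is to show that as multisets of integers,
$$I(c_2, c_1) \cup I(c_3, c_2) \cup I(c_1, c_3) \;=\; I(c_1, c_2) \cup I(c_3, c_1) \cup I(c_2, c_3).$$
For this, the crucial remark is that when $a \le b$ one has $I(b, a) \subset I(a, b)$ with set-difference $\{a+2, \ldots, b+1\}$. Working formally with signed multisets, summing the antisymmetric quantity $I(a,b) - I(b,a)$ over the three pairs $(a,b)=(c_2, c_1), (c_3, c_2), (c_1, c_3)$ then yields zero: assuming without loss of generality $c_1 \le c_2 \le c_3$, the sum telescopes as
$$-\{c_1+2, \ldots, c_2+1\} \;-\; \{c_2+2, \ldots, c_3+1\} \;+\; \{c_1+2, \ldots, c_3+1\} \;=\; 0,$$
since the last interval is the disjoint union of the first two. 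The remaining orderings of $(c_1, c_2, c_3)$ are handled by the same short computation, completing the proof.
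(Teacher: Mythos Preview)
Your argument is correct and, like the paper's, rests on Borodin's product formula. The organization is genuinely different, though. The paper proceeds geometrically: it draws the two hook-length diagrams for profiles $(c_1,c_2,c_3)$ and $(c_2,c_1,c_3)$, identifies the regions whose hooks visibly coincide, and then handles the remaining strips (labelled $A$--$F$ in a figure) by explicit pairwise comparisons of hook-length sets, e.g.\ matching $\{h(\square):\square\in C\}$ against $\{t-h(\square):\square\in E\}$. You instead read off the nine exponent intervals directly from \eqref{BorodinProd}, sort them into \emph{small}, \emph{middle}, and \emph{large} groups, observe that the small and large groups are manifestly symmetric in all three $c_i$, and dispose of the middle group by the telescoping identity on signed multisets. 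Your route has the pleasant feature of exhibiting the full $\mathfrak{S}_3$-symmetry of $F_c(1,q)$ in one stroke, without recourse to a picture; the paper's geometric matching is perhaps more vivid for a reader already thinking in terms of Young diagrams and hooks. Both arguments are of comparable length and neither requires an idea the other lacks.
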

\begin{proof}
We can assume without loss of generality that $c_1 > c_2.$ On Figure \ref{fig:proof}, we can see that the hooks in the plain colored areas are exactly the same for both profiles. 

\begin{figure}[h]
\includegraphics[width=1\textwidth]{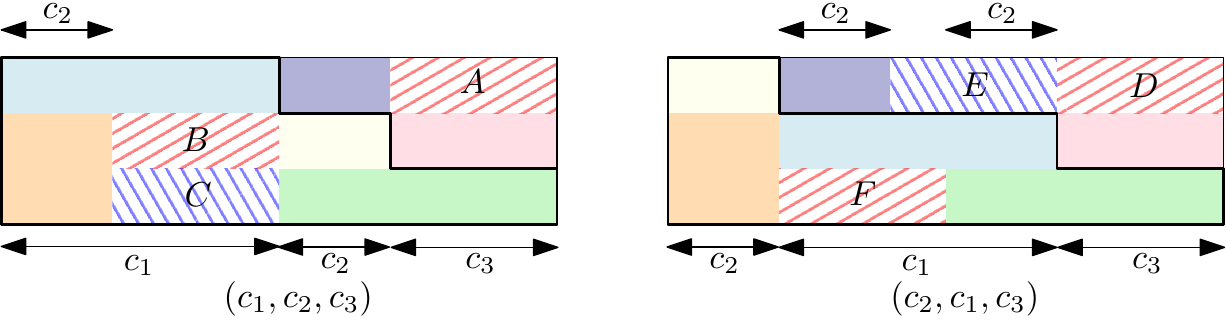}
\caption{The Borodin formula for profiles $(c_1,c_2,c_3)$ and $(c_2,c_1,c_3)$}
\label{fig:proof}
\end{figure}

It remains to see what happens in the dashed areas. First, we have
\begin{equation*}
\prod_{\square \in C} \frac{1}{(q^{h(\square)};q^t)_\infty} =\prod_{\square \in E} \frac{1}{(q^{t-h(\square)};q^t)_\infty}.
\end{equation*}
Indeed,
$\{ h(\square) | \square \in C\}= \{ c_2+c_3+3, \dots, c_1+c_3+2\}.$ On the other hand,
$\{ h(\square) | \square \in E\}= \{c_2+1, \dots, c_1\},$ so that
$\{ t-h(\square) | \square \in E\}= \{ c_2+c_3+3, \dots, c_1+c_3+2\}.$

Finally, the only thing left to do is proving that
\begin{equation}
\label{eq:AB=DF}
\prod_{\square \in B} \frac{1}{(q^{h(\square)};q^t)_\infty} \prod_{\square \in A} \frac{1}{(q^{t-h(\square)};q^t)_\infty} =\prod_{\square \in F} \frac{1}{(q^{h(\square)};q^t)_\infty} \prod_{\square \in D} \frac{1}{(q^{t-h(\square)};q^t)_\infty}.
\end{equation}
Indeed, we have:
\begin{align*}
\{ h(\square) | \square \in B\} &= \{ c_2+2, \dots, c_1+1\},\\
\{ t-h(\square) | \square \in A\} &= \{t- (c_2+2), \dots, t-( c_2+c_3+1)\}\\
&= \{c_1+2, \dots,  c_1+c_3+1 \}.
\end{align*}
On the side of the $(c_2,c_1,c_3)$ profile, we have:
\begin{align*}
\{ h(\square) | \square \in F\} &= \{ c_2+c_3+2, \dots, c_1+c_3+1\},\\
\{ t-h(\square) | \square \in D\} &= \{t- (c_1+2), \dots, t-( c_1+c_3+1)\}\\
&= \{c_2+2, \dots,  c_2+c_3+1 \}.
\end{align*}
Thus $\{ h(\square) | \square \in B\} \cup \{ t-h(\square) | \square \in A\} =\{ h(\square) | \square \in F\} \cup \{ t-h(\square) | \square \in D\}$ and  \eqref{eq:AB=DF} is proved.
\end{proof}

\medskip
On the other hand, Corteel and Welsh \cite{CorteelWelsh} showed that $F_c(z,q)$ satisfies a nice $q$-difference equation. Let $c=(c_1, \dots , c_k)$ and use the convention that $c_0=c_k$. Denote by $I_c$ the set of indices $j \in \{1, \dots, k\}$ such that $c_j >0$. Given a subset $J$ of $I_c$, the composition $c(J)=(c_1(J), \dots , c_k(J))$ is defined by:
\begin{equation}
\label{eq:c(J)}
c_i(J):= \begin{cases}
c_i-1 &\text{if $i \in J$ and $i-1 \notin J$},\\
c_i+1 &\text{if $i \notin J$ and $i-1 \in J$},\\
c_i &\text{otherwise}.
\end{cases}
\end{equation}
The $q$-difference equation is as follows.

\begin{theorem}[Corteel--Welsh] 
\label{th:CW}
For any profile $c$, 
\begin{equation}
\label{CorteelRec}
F_c(z,q) = \sum_{\emptyset\subset J\subseteq I_c} (-1)^{|J|-1} \frac{F_{c(J)}(z q^{|J|},q)}{(1-z q^{|J|})},
\end{equation}
with the initial conditions $F_c(0,q)=F_c(z,0)=1$.
\end{theorem}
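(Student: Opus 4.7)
The plan is to decompose cylindric partitions with profile $c$ according to which leading parts $\lambda^{(i)}_{1}$ attain the global maximum, and then to apply inclusion--exclusion over the Boolean lattice of subsets of $I_c$. Before starting, I would record a preliminary observation: the maximum of a nonempty $\Lambda\in\mathcal{P}_c$ is always attained among the $\lambda^{(i)}_{1}$ with $i\in I_c$, because the cylindric inequality gives $\lambda^{(i-1)}_{1}\ge\lambda^{(i)}_{1+c_i}=\lambda^{(i)}_{1}$ whenever $c_i=0$, so any index realizing the maximum can be shifted backwards until it lands in $I_c$.

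The heart of the argument is a size-preserving bijection, for each nonempty $J\subseteq I_c$, between
\begin{equation*}
\mathcal{A}_J:=\bigl\{\Lambda\in\mathcal{P}_c : \lambda^{(i)}_{1}=\max(\Lambda)\text{ for every }i\in J\bigr\}
\end{equation*}
(including the empty partition by convention) and pairs $(\Lambda',m)$ with $\Lambda'\in\mathcal{P}_{c(J)}$ and $m\ge\max(\Lambda')$. The map deletes the leading part $m=\max(\Lambda)$ from each row of $\Lambda$ indexed by $J$; its inverse prepends $m$ to each such row. One must verify that the cylindric conditions $\lambda^{(i)}_{j}\ge\lambda^{(i+1)}_{j+c_{i+1}}$ for $\Lambda$ become the cylindric conditions with $c_{i+1}$ replaced by $c_{i+1}(J)$ for $\Lambda'$, via a case analysis on whether $i$ and $i+1$ lie in $J$. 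The four cases produce precisely the three values $c_{i+1}-1$, $c_{i+1}+1$, $c_{i+1}$ prescribed by~\eqref{eq:c(J)}. Since $|\Lambda|=|\Lambda'|+|J|m$ and $\max(\Lambda)=m$ under the bijection, summing gives
\begin{equation*}
\sum_{\Lambda\in\mathcal{A}_J}z^{\max(\Lambda)}q^{|\Lambda|}=\sum_{\Lambda'\in\mathcal{P}_{c(J)}}q^{|\Lambda'|}\sum_{m\ge\max(\Lambda')}z^{m}q^{|J|m}=\frac{F_{c(J)}(zq^{|J|},q)}{1-zq^{|J|}}.
\end{equation*}

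Setting $K(\Lambda):=\{i\in I_c:\lambda^{(i)}_{1}=\max(\Lambda)\}$, the preliminary observation guarantees $K(\Lambda)\ne\emptyset$ for every nonempty $\Lambda$, so a standard Möbius inversion on the Boolean lattice of subsets of $I_c$ yields
\begin{equation*}
\sum_{\Lambda\ne\emptyset}z^{\max(\Lambda)}q^{|\Lambda|}=\sum_{\emptyset\ne J\subseteq I_c}(-1)^{|J|-1}\biggl(\frac{F_{c(J)}(zq^{|J|},q)}{1-zq^{|J|}}-1\biggr),
\end{equation*}
where the $-1$ subtracts off the empty partition from $\mathcal{A}_J$. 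Adding back the contribution $1$ of the empty partition to the left-hand side and using the identity $\sum_{\emptyset\ne J\subseteq I_c}(-1)^{|J|-1}=1$ (valid since $I_c\ne\emptyset$) cancels the $-1$'s and delivers~\eqref{CorteelRec}; the initial conditions $F_c(0,q)=F_c(z,0)=1$ are immediate from the definition.

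The main obstacle is the case analysis underlying the bijection: I expect the bookkeeping for the cylindric wrap-around between indices $k$ and $1$ (where $c_0=c_k$ must be used) to be the subtlest point, since the shifts $c_i(J)=c_i\pm 1$ occur precisely at the boundaries of $J$, and whether a given boundary sits in the interior or across the wrap-around has to be tracked consistently with~\eqref{eq:c(J)}.
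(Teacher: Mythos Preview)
The paper does not actually prove Theorem~\ref{th:CW}: it is quoted as a result of Corteel and Welsh~\cite{CorteelWelsh} and used as a tool, so there is no ``paper's own proof'' to compare against. Your proposal is essentially the argument from the original Corteel--Welsh paper: classify a cylindric partition by the subset $K(\Lambda)\subseteq I_c$ of rows whose first entry equals $\max(\Lambda)$, set up the ``delete the maximum from rows in $J$'' bijection $\mathcal{A}_J\leftrightarrow\{(\Lambda',m):\Lambda'\in\mathcal{P}_{c(J)},\ m\ge\max(\Lambda')\}$, and apply inclusion--exclusion. Your case analysis matches the definition of $c(J)$ in~\eqref{eq:c(J)} correctly (including the wrap-around, since the convention $c_0=c_k$ makes the boundary between rows $k$ and $1$ behave exactly like any interior boundary), and the handling of the empty partition via $\sum_{\emptyset\ne J\subseteq I_c}(-1)^{|J|-1}=1$ is fine; in fact you could streamline it by keeping the empty partition in every $\mathcal{A}_J$ from the start, since under the convention $\max(\emptyset)=0$ one has $K(\emptyset)=I_c$ and the signed count already yields~$1$.
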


Cylindric partitions, and in particular Theorems \ref{th:Borodin} and \ref{th:CW}, are a very interesting tool to discover and prove Rogers--Ramanujan type identities.

Before we prove our new results, we present a warm-up exercise that gives a quick proof of
the Rogers--Ramanujan identities, as in  \cite{CorteelRSK} .
These identities can be proved quite easily by using cylindric partitions with profiles $(3,0)$ and $(2,1)$. By Theorem \ref{th:Borodin}, we have
\begin{align}
\label{eq:prod30} F_{(3,0)}(1,q)&= \frac{1}{(q;q)_{\infty}} \times \frac{1}{(q^2,q^3;q^5)_{\infty}},\\
\label{eq:prod21} F_{(2,1)}(1,q)&= \frac{1}{(q;q)_{\infty}} \times \frac{1}{(q,q^4;q^5)_{\infty}}.
\end{align}
By Theorem \ref{th:CW}, we also have
\begin{align}
 \label{eq:30} F_{(3,0)}(z,q)&= \frac{F_{(2,1)}(zq,q)}{1-zq},\\
 \label{eq:21} F_{(2,1)}(z,q)&= \frac{F_{(3,0)}(zq,q)}{1-zq} +  \frac{F_{(2,1)}(zq,q)}{1-zq} - \frac{F_{(2,1)}(zq^2,q)}{1-zq^2}.
\end{align}

Writing, for every composition $c$,
$$G_c(z,q):= (zq;q)_{\infty} F_c(z,q),$$ 
and substituting \eqref{eq:30} into \eqref{eq:21}, we obtain
$$G_{(2,1)}(z,q)= G_{(2,1)}(zq,q)+ zq G_{(2,1)}(zq^2,q).$$
Thus we can easily show that
$$G_{(2,1)}(z,q)= \sum_{n \geq 0} \frac{z^nq^{n^2}}{(q;q)_n},$$
and deduce from \eqref{eq:30} that
$$G_{(3,0)}(z,q)= \sum_{n \geq 0} \frac{z^nq^{n^2+n}}{(q;q)_n}.$$
Combining this with \eqref{eq:prod30} and \eqref{eq:prod21} proves the Rogers--Ramanujan identities.

This method is crucial in the recent paper of Corteel and Welsh \cite{CorteelWelsh}, where they studied all cylindric partitions profiles where $k=3$ and $\ell=4$ to reprove the $A_2$ Rogers--Ramanujan identities due to Andrews, Schilling and Warnaar \cite{AndrewsSchillingWarnaar}.

\medskip
In this paper, we use Theorems \ref{th:Borodin} and \ref{th:CW} on cylindric partitions with all profiles $(c_1,c_2,c_3)$ such that
$c_1+c_2+c_3=5$, to prove new $A_2$ Rogers--Ramanujan type. Our results can be stated as follows.

\begin{theorem}
We have
\label{th:main}
\begin{align}
\label{eq:th500} G_{(5,0,0)}(1,q) &= \sum_{n_1,n_2,n_3,n_4\geq 0 } \frac{q^{n_1^2+n_2^2+n_3^2+n_4^2+n_1+n_2+n_3+n_4-n_1n_2 + n_2n_4}}{(q;q)_{n_1}} {n_1\brack n_2}_q{n_1\brack n_4}_q{n_2 \brack n_3}_q\\
&= \frac{1}{(q^2,q^3,q^3,q^4,q^4,q^5,q^5,q^6;q^8)_\infty}, \nonumber \\
\label{eq:th410=401} G_{(4,1,0)}(1,q) &=\sum_{n_1,n_2,n_3,n_4\geq 0 } \frac{q^{n_1^2+n_2^2+n_3^2+n_4^2+n_2+n_3+n_4-n_1n_2 + n_2n_4}}{(q;q)_{n_1}} {n_1\brack n_2}_q{n_1\brack n_4}_q{n_2 \brack n_3}_q
\\ \nonumber =G_{(4,0,1)}(1,q) &=\sum_{n_1,n_2,n_3,n_4\geq 0 } \frac{q^{n_1^2+n_2^2+n_3^2+n_4^2+n_1+n_3-n_1n_2 + n_2n_4} \left( 1 + q^{n_1+n_2+n_4+1} \right)}{(q;q)_{n_1}} {n_1\brack n_2}_q{n_1\brack n_4}_q{n_2 \brack n_3}_q
\\& = \frac{1}{(q,q^2,q^3,q^4,q^4,q^5,q^6,q^7;q^8)_\infty} , \nonumber \\
\label{eq:th320=302} G_{(3,0,2)}(1,q) &=\sum_{n_1,n_2,n_3,n_4\geq 0 } \frac{q^{n_1^2+n_2^2+n_3^2+n_4^2+n_1-n_1n_2 + n_2n_4} \left( 1+q^{n_1+n_3+1}+q^{2n_1+n_2+n_3+n_4+2} \right)}{(q;q)_{n_1}} \\
\nonumber &\qquad \qquad \qquad \times {n_1\brack n_2}_q{n_1\brack n_4}_q{n_2 \brack n_3}_q \\
=G_{(3,2,0)}(1,q) &=\sum_{n_1,n_2,n_3,n_4\geq 0 } \frac{q^{n_1^2+n_2^2+n_3^2+n_4^2+n_1-n_1n_2 + n_2n_4} \left( q^{n_3}+q^{n_1+1}+q^{2n_1+n_3+2}+q^{3n_1+n_2+n_3+n_4+3} \right)}{(q;q)_{n_1}} \\
\nonumber &\qquad \qquad \qquad \times {n_1\brack n_2}_q{n_1\brack n_4}_q{n_2 \brack n_3}_q \nonumber\\
&= \frac{1}{(q,q^2,q^2,q^3,q^5,q^6,q^6,q^7;q^8)_\infty}, \nonumber \\
\label{eq:th311} G_{(3,1,1)}(1,q) &=\sum_{n_1,n_2,n_3,n_4\geq 0 } \frac{q^{n_1^2+n_2^2+n_3^2+n_4^2+n_3-n_1n_2 + n_2n_4}}{(q;q)_{n_1}} {n_1\brack n_2}_q{n_1\brack n_4}_q{n_2 \brack n_3}_q
\\& = \frac{1}{(q,q,q^3,q^3,q^5,q^5,q^7,q^7;q^8)_\infty}, \nonumber \\
\label{eq:th221} G_{(2,2,1)}(1,q) &=\sum_{n_1,n_2,n_3,n_4\geq 0 } \frac{q^{n_1^2+n_2^2+n_3^2+n_4^2-n_1n_2 + n_2n_4}}{(q;q)_{n_1}} {n_1\brack n_2}_q{n_1\brack n_4}_q{n_2 \brack n_3}_q 
\\&= \frac{1}{(q,q,q^2,q^4,q^4,q^6,q^7,q^7;q^8)_\infty}. \nonumber
\end{align}
\end{theorem}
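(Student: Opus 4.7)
The plan is to follow the methodology illustrated by the Rogers--Ramanujan warm-up: obtain the product side from Borodin's formula (Theorem~\ref{th:Borodin}), derive a closed system of $q$-difference equations from the Corteel--Welsh recursion (Theorem~\ref{th:CW}) relating all profiles $(c_1,c_2,c_3)$ with $c_1+c_2+c_3=5$, guess a quadruple-sum expression for each generating function $G_c(z,q)=(zq;q)_\infty F_c(z,q)$, and then verify that the guessed sums satisfy the same recursion with the same initial conditions $G_c(0,q)=G_c(z,0)=1$.

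First I would enumerate the profiles: up to the cyclic symmetry $F_c(z,q)=F_{S(c)}(z,q)$, there are seven distinct profiles of length $3$ summing to $5$, namely $(5,0,0)$, $(4,1,0)$, $(4,0,1)$, $(3,2,0)$, $(3,0,2)$, $(3,1,1)$, $(2,2,1)$. Applying Borodin's hook formula to each (as illustrated for $(3,1,1)$ in Figure~\ref{fig:borodin311}) immediately yields the product sides in~\eqref{eq:th500}--\eqref{eq:th221}, and the corollary at the end of Section~\ref{sec:next} justifies the equalities $F_{(4,1,0)}(1,q)=F_{(4,0,1)}(1,q)$ and $F_{(3,2,0)}(1,q)=F_{(3,0,2)}(1,q)$ at $z=1$, consistent with the displayed sharing of product sides.

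Next I would write out Theorem~\ref{th:CW} explicitly for each of these seven profiles, obtaining a coupled system of $z$-linear functional equations among the seven functions $F_c(z,q)$ (after using the cyclic symmetry to identify rotations). This step is routine but bookkeeping-intensive: each profile produces a sum over nonempty subsets $J$ of its support, and the resulting profiles $c(J)$ will fall back into the same list of seven up to rotation. One then converts the system into a system for the $G_c(z,q)$ by multiplying through by the appropriate shift of $(zq;q)_\infty$, clearing the $(1-zq^{|J|})$ denominators and producing a clean recursion of the form $G_c(z,q)=\sum_{\emptyset\subset J\subseteq I_c}(-1)^{|J|-1}(zq;q)_{|J|-1} G_{c(J)}(zq^{|J|},q)$.

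The creative part is to guess the sum side. Following the paper's description, I would compute the first several coefficients in $z$ and $q$ of each $G_c(z,q)$ from the recursion, then search among quadruple $q$-hypergeometric sums of the natural shape $\sum_{n_1,n_2,n_3,n_4\ge 0} z^{n_1} q^{Q(n)}/(q;q)_{n_1}\cdot{n_1\brack n_2}{n_1\brack n_4}{n_2\brack n_3}$ with $Q$ a quadratic form plus linear shifts, for an ansatz that matches the data. Once candidates are in hand, the final step is verification: substitute each candidate into the $G$-system derived above and check the equalities as identities of formal power series in $z,q$. This is the main obstacle, because handling quadruple $q$-sums by hand is impractical, so I would invoke the \texttt{qFunctions} package of Ablinger--Uncu, which applies $q$-creative telescoping to certify each recurrence. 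Since both sides are power series in $z$ with the same initial value $1$ at $z=0$ and the same recursion, they agree; specialising to $z=1$ and equating with the Borodin products then yields~\eqref{eq:th500}--\eqref{eq:th221}.
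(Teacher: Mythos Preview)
Your proposal is correct and follows essentially the same approach as the paper: Borodin's formula for the product sides, the Corteel--Welsh recursion rewritten in $G$-form for the coupled system of $q$-difference equations, and computer-algebra verification (via \texttt{qFunctions}/\texttt{HolonomicFunctions}) that the candidate quadruple sums satisfy the same system with the same initial conditions. The one practical refinement in the paper is that it does not verify all seven sums directly: it passes to recurrences in the $z^k$-coefficients, uncouples, and applies creative telescoping only to the four ``clean'' profiles $(5,0,0),(4,1,0),(3,1,1),(2,2,1)$, and then obtains the three sums carrying extra polynomial factors, namely $(4,0,1),(3,0,2),(3,2,0)$, by substituting the already-proven sums into suitably simplified combinations of the functional equations, since a direct attack on those three yields recurrences of unnecessarily high order.
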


%
%
%
%

\section{Proof of Theorem \ref{th:main}}
\label{sec:proof}
In this section, we prove our Rogers--Ramanujan type identities. For the product side, a direct application of Theorem \ref{th:Borodin} gives the following.
\begin{corollary}
We have
\label{th:product_side}
\begin{align}
\label{eq:prod500} G_{(5,0,0)}(1,q) &= \frac{1}{(q^2,q^3,q^3,q^4,q^4,q^5,q^5,q^6;q^8)_\infty},\\
\label{eq:prod410=401} G_{(4,1,0)}(1,q) =G_{(4,0,1)}(1,q) &= \frac{1}{(q,q^2,q^3,q^4,q^4,q^5,q^6,q^7;q^8)_\infty} , \\
\label{eq:prod320=302} G_{(3,0,2)}(1,q) =G_{(3,2,0)}(1,q) &= \frac{1}{(q,q^2,q^2,q^3,q^5,q^6,q^6,q^7;q^8)_\infty},\\
\label{eq:prod311} G_{(3,1,1)}(1,q) &=\frac{1}{(q,q,q^3,q^3,q^5,q^5,q^7,q^7;q^8)_\infty},  \\
\label{eq:prod221} G_{(2,2,1)}(1,q) &= \frac{1}{(q,q,q^2,q^4,q^4,q^6,q^7,q^7;q^8)_\infty}.
\end{align}
\end{corollary}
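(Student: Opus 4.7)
My plan is to apply Theorem \ref{th:Borodin} directly and then multiply by $(q;q)_\infty$. First, I would note that by the preceding Corollary together with the trivial cyclic symmetry $F_c = F_{S(c)}$, the identities $F_{(4,1,0)} = F_{(4,0,1)}$ and $F_{(3,0,2)} = F_{(3,2,0)}$ hold automatically, so only the five representatives $(5,0,0)$, $(4,1,0)$, $(3,0,2)$, $(3,1,1)$, $(2,2,1)$ need to be handled.

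For each representative $c$, I would draw the associated Young diagram $\mu$ and its complement inside the bounding rectangle as in Figure \ref{fig:borodin}, read off the hook lengths, and substitute $t = k+\ell = 8$. This presents $F_c(1,q)$ as $\frac{1}{(q^8;q^8)_\infty}$ times a product of fifteen factors of the form $\frac{1}{(q^a;q^8)_\infty}$ with $1 \le a \le 7$, exactly as the paper already does for profile $(3,1,1)$ in Figure \ref{fig:borodin311}.

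To pass from $F_c(1,q)$ to $G_c(1,q) = (q;q)_\infty F_c(1,q)$, I would use the base-$8$ decomposition
\[
(q;q)_\infty = \prod_{a=1}^{8}(q^a;q^8)_\infty.
\]
The numerator factor $(q^8;q^8)_\infty$ cancels the Borodin denominator factor $(q^8;q^8)_\infty$, and each of the seven factors $(q^a;q^8)_\infty$ for $1 \le a \le 7$ cancels a single copy of $\frac{1}{(q^a;q^8)_\infty}$ in the Borodin product. The eight remaining factors then match, in each of the five cases, the right-hand sides of (\ref{eq:prod500})--(\ref{eq:prod221}), which one verifies by direct inspection of the multiset of hook lengths.

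The main obstacle here is really just bookkeeping: accurately enumerating the fifteen hook lengths for each of the five profile representatives. There is nothing conceptual to overcome, and the $(3,1,1)$ computation already carried out in the body of the paper serves as a complete worked template for the remaining four profiles.
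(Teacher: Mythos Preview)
Your proposal is correct and is exactly the approach the paper takes: the paper's entire proof is the single sentence ``a direct application of Theorem~\ref{th:Borodin} gives the following,'' and you have simply spelled out what that direct application consists of, including the use of the cyclic symmetry and the preceding Corollary to identify $(4,1,0)$ with $(4,0,1)$ and $(3,0,2)$ with $(3,2,0)$, and the base-$8$ factorisation of $(q;q)_\infty$ to pass from $F_c$ to $G_c$.
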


We will now use Theorem \ref{th:CW} to prove the sum sides. All computations are more easily done with $G_c(z,q)$ than $F_c(z,q)$, so let us first recall a reformulation of Theorem \ref{th:CW} given in \cite{CorteelWelsh}.

\begin{theorem}[Corteel and Welsh, 2019] 
\label{th:CWforG}
For any profile $c$, 
\begin{equation}
\label{eq:qdiffG}
G_c(z,q) = \sum_{\emptyset\subset J\subseteq I_c} (-1)^{|J|-1} (zq;q)_{|J|-1} G_{c(J)}(z q^{|J|},q),
\end{equation}
with the initial conditions $G_c(0,q)=G_c(z,0)=1$.
\end{theorem}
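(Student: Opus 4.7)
The plan is to derive Theorem \ref{th:CWforG} directly from Theorem \ref{th:CW} by making the substitution that defines $G_c$, namely $G_c(z,q) = (zq;q)_\infty\, F_c(z,q)$. This is an algebraic reformulation rather than a new structural argument: the content of the $q$-difference equation is already contained in Theorem \ref{th:CW}, and we are simply rearranging the $q$-Pochhammer factors so that the infinite product in the denominator on the left is absorbed into finite truncated products on the right.

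Concretely, I would first replace every $F_c$ appearing in \eqref{CorteelRec} by $G_c/(zq;q)_\infty$, noting that on the right-hand side the substitution $z \mapsto zq^{|J|}$ turns the denominator into $(zq^{|J|+1};q)_\infty$. This yields
\begin{equation*}
\frac{G_c(z,q)}{(zq;q)_\infty} = \sum_{\emptyset \subset J \subseteq I_c} (-1)^{|J|-1}\, \frac{G_{c(J)}(zq^{|J|},q)}{(1-zq^{|J|})\,(zq^{|J|+1};q)_\infty}.
\end{equation*}
The key step is then to clear denominators using the telescoping factorization
\begin{equation*}
(zq;q)_\infty \;=\; (zq;q)_{|J|-1}\,(1-zq^{|J|})\,(zq^{|J|+1};q)_\infty,
\end{equation*}
which is valid for every $|J| \geq 1$. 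After multiplying through by $(zq;q)_\infty$, the factors $(1-zq^{|J|})$ and $(zq^{|J|+1};q)_\infty$ cancel inside each summand, and what remains in front of $G_{c(J)}(zq^{|J|},q)$ is precisely $(-1)^{|J|-1}(zq;q)_{|J|-1}$, giving \eqref{eq:qdiffG}.

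For the initial conditions, I would check that $G_c(0,q) = (0;q)_\infty F_c(0,q) = 1$ and $G_c(z,0) = 1$, both of which follow immediately from the corresponding conditions for $F_c$ together with $(0;q)_\infty = 1$. There is essentially no obstacle in this proof; the only bookkeeping point is the indexing convention, under which $(zq;q)_{|J|-1}$ must be read as the empty product $1$ when $|J|=1$. This matches exactly the cancellation of the simple pole $1/(1-zq^{|J|})$ in the $|J|=1$ term of Theorem \ref{th:CW}, and is the reason the reformulated identity \eqref{eq:qdiffG} is so much more convenient for explicit computation of the sum-sides in Theorem \ref{th:main}.
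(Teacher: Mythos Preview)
Your proposal is correct and matches the paper's treatment: the paper does not give a separate proof of Theorem \ref{th:CWforG} but simply presents it as ``a reformulation of Theorem \ref{th:CW} given in \cite{CorteelWelsh}'', and your argument is exactly that reformulation, obtained by substituting $G_c(z,q)=(zq;q)_\infty F_c(z,q)$ and using $(zq;q)_\infty=(zq;q)_{|J|-1}(1-zq^{|J|})(zq^{|J|+1};q)_\infty$ to cancel denominators.
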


Our main result is the following expressions for $G_c(z,q)$ as quadruple series.
\begin{theorem}
We have
\label{th:sum_side}
\begin{align}
\label{eq:sum500} G_{(5,0,0)}(z,q) &= \sum_{n_1,n_2,n_3,n_4\geq 0 } \frac{z^{n_1}q^{n_1^2+n_2^2+n_3^2+n_4^2+n_1+n_2+n_3+n_4-n_1n_2 + n_2n_4}}{(q;q)_{n_1}} {n_1\brack n_2}_q{n_1\brack n_4}_q{n_2 \brack n_3}_q,\\
\label{eq:sum410} G_{(4,1,0)}(z,q) &=\sum_{n_1,n_2,n_3,n_4\geq 0 } \frac{z^{n_1}q^{n_1^2+n_2^2+n_3^2+n_4^2+n_2+n_3+n_4-n_1n_2 + n_2n_4}}{(q;q)_{n_1}} {n_1\brack n_2}_q{n_1\brack n_4}_q{n_2 \brack n_3}_q,\\
\label{eq:sum401} G_{(4,0,1)}(z,q) &=\sum_{n_1,n_2,n_3,n_4\geq 0 } \frac{z^{n_1}q^{n_1^2+n_2^2+n_3^2+n_4^2+n_1+n_3-n_1n_2 + n_2n_4} \left( 1 + zq^{n_1+n_2+n_4+1} \right)}{(q;q)_{n_1}} {n_1\brack n_2}_q{n_1\brack n_4}_q{n_2 \brack n_3}_q\\
\label{eq:sum302} G_{(3,0,2)}(z,q) &=\sum_{n_1,n_2,n_3,n_4\geq 0 } \frac{z^{n_1} q^{n_1^2+n_2^2+n_3^2+n_4^2+n_1-n_1n_2 + n_2n_4} \left( 1+zq^{n_1+n_3+1}+zq^{2n_1+n_2+n_3+n_4+2} \right)}{(q;q)_{n_1}} \\
\nonumber &\qquad \qquad \qquad \times {n_1\brack n_2}_q{n_1\brack n_4}_q{n_2 \brack n_3}_q, \\
\label{eq:sum320} G_{(3,2,0)}(z,q) &=\sum_{n_1,n_2,n_3,n_4\geq 0 } \frac{z^{n_1}q^{n_1^2+n_2^2+n_3^2+n_4^2+n_1-n_1n_2 + n_2n_4} \left( q^{n_3}+zq^{n_1+1}+zq^{2n_1+n_3+2}+zq^{3n_1+n_2+n_3+n_4+3} \right)}{(q;q)_{n_1}} \\
\nonumber &\qquad \qquad \qquad \times {n_1\brack n_2}_q{n_1\brack n_4}_q{n_2 \brack n_3}_q,\\
\label{eq:sum311} G_{(3,1,1)}(z,q) &=\sum_{n_1,n_2,n_3,n_4\geq 0 } \frac{z^{n_1} q^{n_1^2+n_2^2+n_3^2+n_4^2+n_3-n_1n_2 + n_2n_4}}{(q;q)_{n_1}} {n_1\brack n_2}_q{n_1\brack n_4}_q{n_2 \brack n_3}_q,\\
\label{eq:sum221} G_{(2,2,1)}(z,q) &=\sum_{n_1,n_2,n_3,n_4\geq 0 } \frac{z^{n_1}q^{n_1^2+n_2^2+n_3^2+n_4^2-n_1n_2 + n_2n_4}}{(q;q)_{n_1}} {n_1\brack n_2}_q{n_1\brack n_4}_q{n_2 \brack n_3}_q.
\end{align}
\end{theorem}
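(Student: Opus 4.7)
The plan is to establish each of the seven identities by showing that the candidate quadruple sum on the right, which I will denote by $H_c(z,q)$ for the profile $c$, satisfies the same $q$-difference system that Theorem~\ref{th:CWforG} imposes on $G_c(z,q)$, with the same initial conditions. Observe first that under the operation $c \mapsto c(J)$ of \eqref{eq:c(J)} together with the cyclic symmetry $F_{(c_1,c_2,c_3)}(z,q)=F_{(c_2,c_3,c_1)}(z,q)$ built into Definition~\ref{def:cylin}, the seven profiles $(5,0,0)$, $(4,1,0)$, $(4,0,1)$, $(3,0,2)$, $(3,2,0)$, $(3,1,1)$, $(2,2,1)$ form a closed set, so \eqref{eq:qdiffG} yields a closed system of exactly seven $q$-difference equations linking these $G_c$'s. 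A careful case-by-case inspection of \eqref{eq:c(J)} produces this system explicitly; for example $G_{(5,0,0)}(z,q)=G_{(4,1,0)}(zq,q)$ and
$$G_{(4,1,0)}(z,q) = G_{(4,0,1)}(zq,q) + G_{(3,2,0)}(zq,q) - (1-zq)\,G_{(3,1,1)}(zq^2,q),$$
with the remaining five recurrences derived analogously.

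I would then verify the initial conditions $H_c(0,q)=H_c(z,0)=1$ for each of the seven candidates. Setting $z=0$ kills every term with $n_1>0$, and then the $q$-binomial factors ${n_1 \brack n_2}_q$, ${n_1 \brack n_4}_q$, ${n_2 \brack n_3}_q$ force $n_2=n_3=n_4=0$; the surviving summand at $(0,0,0,0)$ equals $1$ in each case, including for the profiles $(4,0,1)$, $(3,0,2)$, $(3,2,0)$ whose summands carry extra $z$-dependent polynomial prefactors. The check $H_c(z,0)=1$ is analogous since setting $q=0$ again isolates the all-zero summand.

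The main step, and the principal obstacle, is to verify that each $H_c$ satisfies the corresponding $q$-difference equation
$$H_c(z,q) = \sum_{\emptyset \subset J \subseteq I_c} (-1)^{|J|-1} (zq;q)_{|J|-1}\, H_{c(J)}(zq^{|J|},q).$$
Each side is a signed combination of quadruple $q$-hypergeometric sums in $n_1,n_2,n_3,n_4$, and a by-hand proof would require a long chain of shifted-index and $q$-binomial manipulations for each of the seven profiles. The plan is to automate this step using the \texttt{qFunctions} package of Ablinger and Uncu \cite{qFunctions}: for each profile $c$ one feeds the explicit summand of $H_c(z,q)$ together with the summands of the shifted $H_{c(J)}(zq^{|J|},q)$ into the package, and uses its creative telescoping routines to certify that the difference vanishes after telescoping in all four summation variables. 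Since the system is finite and closed, this reduces the theorem to a finite collection of mechanical checks.

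Finally, I would close the argument by uniqueness. Writing $H_c(z,q)=\sum_{n\ge 0} h_c(n,q)\,z^n$, extracting the coefficient of $z^n$ in \eqref{eq:qdiffG} expresses $h_c(n,q)$ in terms of $\{h_{c'}(m,q):m\le n\}$; the only diagonal contribution, coming from $J=I_c$ with $c(J)=c$ (which occurs for $c=(3,1,1)$ and $c=(2,2,1)$), enters with a factor $q^{|I_c|n}$, so the equation for $h_c(n,q)$ reduces to multiplication by $1-q^{|I_c|n}$, a unit in $\mathbb{Z}[[q]]$ for $n\ge 1$. Combined with the initial condition $h_c(0,q)=1$, this forces $H_c(z,q)=G_c(z,q)$ for all seven profiles. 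Specialising $z=1$ and invoking Corollary~\ref{th:product_side} then yields Theorem~\ref{th:main}.
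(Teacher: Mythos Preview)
Your overall strategy---verify that the seven candidate sums $H_c$ satisfy the closed Corteel--Welsh system and then invoke uniqueness---is sound and close in spirit to the paper's, but the execution differs in two ways worth noting. The paper does not verify the coupled equations directly; instead it converts them to recurrences in the $z^k$-coefficients $g_c(k)$ and \emph{uncouples} the system (via Gr\"obner bases in \texttt{HolonomicFunctions}) to obtain, for each of the profiles $(4,1,0)$, $(3,1,1)$, $(2,2,1)$, a single order-$4$ recurrence, which is then matched against the recurrence that creative telescoping produces for the corresponding candidate sum; \eqref{eq:sum500} then follows from \eqref{eq:qdiff500}. More importantly, for the three profiles $(4,0,1)$, $(3,0,2)$, $(3,2,0)$ whose candidates carry polynomial prefactors, the paper deliberately avoids symbolic computation: it remarks that direct creative telescoping on those sums returns recurrences of too-high order to match the uncoupled ones, and instead manipulates the coupled system algebraically into simplified relations such as $G_{(4,0,1)}(z,q) = G_{(3,1,1)}(zq,q) + zq\,G_{(4,1,0)}(zq^2,q)$, into which the already-proven sums are substituted by hand. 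Your uniform telescoping plan is valid in principle, but it would likely run into exactly the computational obstacle the paper flags for those three profiles, so the paper's hybrid route (machine for the simple four, algebraic substitution for the other three) is what actually makes the proof go through cleanly.

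One small repair to your uniqueness argument: for \emph{every} nonempty $J\subseteq I_c$, the term $G_{c(J)}(zq^{|J|},q)$ contributes to the coefficient of $z^n$ with weight $q^{|J|n}h_{c(J)}(n,q)$, not only the diagonal ones with $c(J)=c$; so the equation for $h_c(n,q)$ does not reduce to multiplication by $1-q^{|I_c|n}$. The correct statement is that the resulting $7\times 7$ transfer matrix acting on $(h_c(n,q))_c$ is congruent to the identity modulo $q$ for every $n\ge 1$, hence invertible over $\mathbb{Z}[[q]]$, which together with $h_c(0,q)=1$ gives the uniqueness you need.
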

Note that even though, when $z=1$,  $G_{(4,1,0)}(1,q)= G_{(4,0,1)}(1,q)$ and $G_{(3,0,2)}(1,q)=G_{(3,2,0)}(1,q)$, in general $G_{(4,1,0)}(z,q)$ is not equal to $G_{(4,0,1)}(z,q)$ and $G_{(3,0,2)}(z,q)$ is not equal to $G_{(3,2,0)}(z,q)$.

\begin{proof}
Applying Theorem \ref{th:CWforG}, we obtain the following $q$-difference equations for the compositions under consideration.
\begin{align}
\label{eq:qdiff500} G_{(5,0,0)}(z,q) &= G_{(4,1,0)}(zq,q),\\
\label{eq:qdiff410} G_{(4,1,0)}(z,q) &= G_{(4,0,1)}(zq,q) + G_{(3,2,0)}(zq,q) - (1-zq) G_{(3,1,1)}(zq^2,q),\\
\label{eq:qdiff401} G_{(4,0,1)}(z,q) &= G_{(5,0,0)}(zq,q) + G_{(3,1,1)}(zq,q) - (1-zq) G_{(4,1,0)}(zq^2,q),\\
\label{eq:qdiff320} G_{(3,2,0)}(z,q) &= G_{(3,1,1)}(zq,q) + G_{(3,0,2)}(zq,q) - (1-zq) G_{(2,2,1)}(zq^2,q),\\
\label{eq:qdiff311} G_{(3,1,1)}(z,q) &= G_{(4,1,0)}(zq,q) + G_{(3,0,2)}(zq,q) + G_{(2,2,1)}(zq,q)\\
\nonumber & - (1-zq) \big( G_{(4,0,1)}(zq^2,q) + G_{(3,2,0)}(zq^2,q) + G_{(2,2,1)}(zq^2,q) \big)\\
\nonumber & +(1-zq)(1-zq^2)  G_{(3,1,1)}(zq^3,q),\\
\label{eq:qdiff302} G_{(3,0,2)}(z,q) &= G_{(4,0,1)}(zq,q) + G_{(2,2,1)}(zq,q) - (1-zq) G_{(3,1,1)}(zq^2,q),\\
\label{eq:qdiff221} G_{(2,2,1)}(z,q) &= G_{(3,2,0)}(zq,q) + G_{(3,1,1)}(zq,q) + G_{(2,2,1)}(zq,q)\\
\nonumber & - (1-zq) \big( G_{(3,1,1)}(zq^2,q) + G_{(3,0,2)}(zq^2,q) + G_{(2,2,1)}(zq^2,q) \big)\\
\nonumber  & +(1-zq)(1-zq^2)  G_{(2,2,1)}(zq^3,q).
\end{align}

We use Ablinger and Uncu's \texttt{qFunctions} package \cite{qFunctions} together with Koutschan's \texttt{HolonomicFunctions} package \cite{HolonomicFunctions} to prove \eqref{eq:sum500}, \eqref{eq:sum410}, \eqref{eq:sum311} and \eqref{eq:sum221}. The coupled $q$-difference equation system \eqref{eq:qdiff500}-\eqref{eq:qdiff221} corresponds to a coupled $q$-recurrence system. To that end, for any profile $c$ we define the formal  \begin{equation}\label{eq:formalpowerseries_G_to_g} G_c(z,q) = \sum_{k\geq 0} g_c(k) z^k,
\end{equation} where $g_c(k)$ is a function of $q$. The initial conditions $g_c(0)=1$ and $g_c(k)=0$ for every $k\leq 0$ define these sequences uniquely for all the relevant profiles $c$. Then, for example \eqref{eq:qdiff410} is equivalent to \begin{equation}
\label{eq:qrec410} 
g_{(4,1,0)}(k) = q^k g_{(4,0,1)}(k) + q^k g_{(3,2,0)}(k) - q^{2k} g_{(3,1,1)}(k) + q^{2k-1}g_{(3,1,1)}(k-1),
\end{equation} for all $k\in\Z$. Uncoupling a coupled system of recurrences corresponds to Gaussian elimination and can be done automatically by symbolic computation implementations. Here we use the \texttt{HolonomicFunctions} package \cite{HolonomicFunctions}, which uses Groebner bases calculations to uncouple a given linear system of recurrences. Uncoupling the recurrences shows that the coefficients $g_{(5,0,0)}(k)$, $g_{(4,1,0)}(k)$, $g_{(3,2,0)}(k)$, $g_{(3,1,1)}(k)$, and $g_{(2,2,1)}(k)$ each satisfy recurrences of order 4 and $g_{(4,0,1)}(k)$ and $g_{(3,0,2)}(k)$ satisfy recurrences of order 6. We present the recurrence that $g_{(4,1,0)}(k)$ satisfies as an explicit example:
\begin{align}
\label{eq:recg410}
a_k(q) &g_{(4,1,0)}(k) - b_k(q) g_{(4,1,0)}(k-1) - c_k(q) g_{(4,1,0)}(k-2) + d_k(q)g_{(4,1,0)}(k-3) + e_k(q) g_{(4,1,0)}(k-4)=0,\\ \intertext{where}
\nonumber a_k(q) &= (1-q^k) (1 - q^{k-2} - 
 2 q^{k-1} - q^k + q^{2 k-3} + q^{2 k-2} + q^{2 k-1} - q^{3k-5} - q^{3 k-4} - q^{3 k-3}\\ \nonumber &\hspace{4cm} - q^{4 k-5} - q^{4 k-4}- q^{4 k-3} + q^{5 k-7} + q^{5 k-6} + q^{5 k-5} - q^{6 k-8} - q^{6 k-7} - q^{6 k-6}),\\
\nonumber b_k(q)&=q^{2k+1} (1+ q^{k-2}-q^{2 k-6}-2 q^{2 k-5}-4 q^{2 k-4}-5 q^{2 k-3}-5 q^{2 k-2}-2 q^{2k-1}-q^{2 k}+q^{3 k-7}+2 q^{3 k-6}
\\ \nonumber &\hspace{1.7cm}+4 q^{3 k-5}+5 q^{3 k-4}+6 q^{3 k-3}+5 q^{3 k-2}+3 q^{3k-1}+q^{3 k}-q^{4 k-8}-4 q^{4 k-7}-6 q^{4 k-6}-8 q^{4 k-5}
\\ \nonumber &\hspace{1.7cm}-7 q^{4 k-4}-7 q^{4 k-3}-4 q^{4 k-2}-2q^{4 k-1}+q^{5 k-8}+2 q^{5 k-7}+2 q^{5 k-6}+q^{5 k-5}-q^{5 k-2}+q^{6 k-9}
\\ \nonumber &\hspace{1.7cm}+q^{6 k-8}+2 q^{6k-7}+2 q^{6 k-6}+3 q^{6 k-5}+3 q^{6 k-4}+3 q^{6 k-3}+2 q^{6 k-2}-q^{7 k-10}-3 q^{7 k-9}
\\ \nonumber &\hspace{1.7cm}-5q^{7 k-8}-7 q^{7 k-7}-7 q^{7 k-6}-7 q^{7 k-5}-5 q^{7 k-4}-4 q^{7 k-3}-q^{7 k-2}+q^{8k-10}+3 q^{8 k-9}
\\ \nonumber &\hspace{1.7cm}+5 q^{8 k-8}+6 q^{8 k-7}+4 q^{8 k-6}+3 q^{8 k-5}+q^{8 k-4}+q^{8 k-3}-q^{9k-10}-2 q^{9 k-9}-3 q^{9 k-8}
\\ \nonumber &\hspace{1.7cm}-2 q^{9 k-7}-q^{9 k-6}-q^{10 k-9}-q^{10 k-8}-2 q^{10k-7}-q^{10 k-6}-q^{10 k-5}+q^{11 k-10}+2 q^{11 k-9}
\\ \nonumber &\hspace{8cm}+2 q^{11 k-8}+q^{11 k-7}-q^{12k-10}-q^{12 k-9}-q^{12 k-8}),\\
\nonumber c_k(q) &= q^{3k-2}(1+q-2 q^{k-1}-3 q^k-2 q^{k+1}-q^{k+2}+q^{2 k-2}+2 q^{2k-1}+2 q^{2 k}+q^{2 k+1}+q^{2 k+2}-q^{3 k-5}-q^{3 k-4}
\\ \nonumber &\hspace{1.3cm}-3 q^{3 k-3}-3 q^{3 k-2}-3 q^{3 k-1}-q^{3 k}-q^{3k+1}+q^{4 k-7}+2 q^{4 k-6}+3 q^{4 k-5}+3 q^{4 k-4}+3 q^{4 k-3}
\\ \nonumber &\hspace{1.3cm}+q^{4 k-2}-2 q^{4 k}-q^{4 k+1}-q^{4k+2}-q^{5 k-8}-2 q^{5 k-7}-3 q^{5 k-6}-2 q^{5 k-5}-3 q^{5 k-4}-q^{5 k-3}
\\ \nonumber &\hspace{1.3cm}+2 q^{5 k-1}+2 q^{5 k}+q^{5k+1}+q^{6 k-9}+3 q^{6 k-8}+4 q^{6 k-7}+4 q^{6 k-6}+2 q^{6 k-5}+q^{6 k-4}-2 q^{6 k-3}
\\ \nonumber &\hspace{1.3cm}-2q^{6 k-2}-4 q^{6 k-1}-2 q^{6 k}-q^{6 k+1}-q^{7 k-9}-q^{7 k-8}-q^{7 k-7}+q^{7 k-6}+2 q^{7 k-5}+3 q^{7k-4}+3 q^{7 k-3}
\\ \nonumber &\hspace{1.3cm}+2 q^{7 k-2}+q^{7 k-1}+q^{8 k-9}-2 q^{8 k-6}-2 q^{8 k-5}-3 q^{8 k-4}-2q^{8 k-3}-q^{8 k-2}+q^{9 k-9}+2 q^{9 k-8}
\\ \nonumber &\hspace{1.3cm}+3 q^{9 k-7}+3 q^{9 k-6}+2 q^{9 k-5}+q^{9k-4}-q^{10 k-9}-q^{10 k-8}-q^{10 k-7}+q^{11 k-9}+q^{11 k-8}+q^{11 k-7}),\\
\nonumber
d_k(q) &= q^{5k-4}(1-q^{k-3}-q^{k-2}-q^{k-1}+q^{2 k-7}+q^{2 k-3}-q^{2 k}-q^{3 k-8}-2 q^{3 k-7}-2 q^{3k-6}-2 q^{3 k-5}-2 q^{3 k-4}
\\ \nonumber &\hspace{1.6cm}-q^{3 k-3}+q^{3 k-1}+q^{3 k}+q^{4 k-8}+q^{4 k-7}+q^{4 k-6}-q^{4k-3}-q^{4 k-2}-q^{4 k-1}-q^{5 k-9}-q^{5 k-8}
\\ \nonumber &\hspace{1.6cm}-q^{5 k-7}-q^{5 k-6}-q^{5 k-5}-q^{5 k-4}-q^{6 k-8}-q^{6 k-7}-q^{6 k-6}-q^{6 k-5}-q^{6 k-4}-q^{6 k-3}+q^{7 k-9}
\\ \nonumber &\hspace{5.5cm}+q^{7 k-8}+q^{7 k-7}-q^{8k-9}-q^{8 k-8}-q^{8 k-7}-q^{8 k-6}-q^{8 k-5}-q^{8 k-4}),\\
\nonumber e_k(q) &= q^{6k-9} (1-q^{k-1}-2 q^k-q^{k+1}+q^{2 k-1}+q^{2 k}+q^{2 k+1}-q^{3 k-2}-q^{3k-1}-q^{3 k}-q^{4 k-1}-q^{4 k}-q^{4 k+1}
\\ \nonumber &\hspace{8.6cm}+q^{5 k-2}+q^{5 k-1}+q^{5 k}-q^{6 k-2}-q^{6 k-1}-q^{6 k}).
\end{align}

The recurrence \eqref{eq:recg410} alone shows that symbolic computation implementations such as \texttt{HolonomicFunctions} \cite{HolonomicFunctions} is a necessity for modern applications to perform tedious calculations with ease and eliminate any errors. Generating the coupled system of $q$-difference equations \eqref{eq:qdiff500}-\eqref{eq:qdiff221}, turning these $q$-difference equations into $q$-recurrences in the form of \eqref{eq:qrec410} and uncoupling of the recurrence system (implicitly using \texttt{HolonomicFunctions}) can be automatically done by the \texttt{qFunctions} \cite{qFunctions} package. 

Now we can turn our attention on the claimed explicit expression \eqref{eq:sum410} for $G_{(4,1,0)}(z,q)$. We can find a recurrence that the coefficients \[g_k(q):=\sum_{n_2,n_3,n_4\geq 0 } \frac{q^{k^2+n_2^2+n_3^2+n_4^2+n_2+n_3+n_4-kn_2 + n_2n_4}}{(q;q)_{k}} {k\brack n_2}_q{k\brack n_4}_q{n_2 \brack n_3}_q \] of $z^k$ in \eqref{eq:sum410} satisfy using \texttt{HolonomicFunctions} \cite{HolonomicFunctions}. This is done by a successive application of Zeilberger's creative telescoping algorithm. Here one sees that $g_k(q)$ satisfies the same recurrence \eqref{eq:recg410} that $g_{(4,1,0)}(k)$ satisfies. We can easily check that $g_k(q)$ and $g_{(4,1,0)}(k)$ satisfy the same initial conditions. This proves \eqref{eq:sum410}.

Same steps can be taken to prove \eqref{eq:sum500}, \eqref{eq:sum311}, and \eqref{eq:sum221}. In these cases, the uncoupled recurrences that come from \eqref{eq:qdiff500}-\eqref{eq:qdiff221} and the recurrences we get from the right-hand sides of \eqref{eq:sum500}, \eqref{eq:sum311}, and \eqref{eq:sum221} coincide. Checking the trivial initial conditions proves these claims. 

In the remaining cases, \eqref{eq:sum401}, \eqref{eq:sum302}, and \eqref{eq:sum320}, we cannot directly apply the available symbolic computation implementations. This is due to the extra polynomial factors that appear in these expressions. Technically speaking, we can still split these sums and find recurrences for each piece. Later we can use the closure properties of holonomic sequences under addition to find a recurrence that is satisfied by all pieces simultaneously. This gives us a recurrence of much higher order than necessary. These recurrences do not match the recurrences we find from the coupled system \eqref{eq:qdiff500}-\eqref{eq:qdiff221}. Therefore, to finish the proofs this way, we would need to use the closure properties once again. This is complicated and it can be avoided all together.

With the identities \eqref{eq:sum410}, \eqref{eq:sum311}, and \eqref{eq:sum221} proven, we now turn to the remaining sum formulas of Theorem \ref{th:sum_side}.
The formula \eqref{eq:sum500} follows easily from \eqref{eq:sum410} and \eqref{eq:qdiff500}.

Replacing $z$ by $zq$ in \eqref{eq:qdiff500}, we have
$$G_{(5,0,0)}(zq,q) = G_{(4,1,0)}(zq^2,q).$$
Substituting this into \eqref{eq:qdiff401} yields
\begin{equation}
\label{eq:qdiff401bis}
G_{(4,0,1)}(z,q) = G_{(3,1,1)}(zq,q) + zq G_{(4,1,0)}(zq^2,q).
\end{equation}
Using \eqref{eq:sum410} and \eqref{eq:sum311} in this new equation \eqref{eq:qdiff401bis}, we prove \eqref{eq:sum401}.

Now in \eqref{eq:qdiff302}, we replace $G_{(4,0,1)}(zq,q)$ by its expression given by \eqref{eq:qdiff401bis}, and obtain
\begin{equation}
\label{eq:qdiff302bis}
G_{(3,0,2)}(z,q) = G_{(2,2,1)}(zq,q) + zq G_{(3,1,1)}(zq^2,q) +zq^2 G_{(4,1,0)}(zq^3,q).
\end{equation}
So we can prove \eqref{eq:sum302} by using \eqref{eq:sum410}, \eqref{eq:sum311}, and \eqref{eq:sum221} in \eqref{eq:qdiff302bis}.

Finally, in \eqref{eq:qdiff320}, we replace $G_{(3,0,2)}(zq,q)$ by its expression given by \eqref{eq:qdiff302bis}. This gives
\begin{equation}
\label{eq:qdiff320bis}
G_{(3,2,0)}(z,q) = G_{(3,1,1)}(zq,q) + zq G_{(2,2,1)}(zq^2,q) +zq^2 G_{(3,1,1)}(zq^3,q)+ zq^3 G_{(4,1,0)}(zq^4,q).
\end{equation}
Thus we can prove the final sum \eqref{eq:sum320} by using \eqref{eq:sum410}, \eqref{eq:sum311}, and \eqref{eq:sum221} in \eqref{eq:qdiff320bis}.
\end{proof}

Combining Theorems \ref{th:product_side} and \ref{th:sum_side} concludes the proof of Theorem \ref{th:main}.


\section{Conclusion}

In this note we made a first progress towards the general family of $A_2$ Rogers--Ramanujan identities.
To get the full picture, we would need to compute the generating functions of cylindric partitions of
profile $(c_1,c_2,c_3)$ for any $c_1,c_2,c_3\in \N$. 
The Rogers--Ramanujan identities correspond to the case $c_1+c_2+c_3=2$. The 
previously known $A_2$ Rogers--Ramanujan identities of Andrews, Schilling, and Warnaar correspond to the case $c_1+c_2+c_3=4$. In this note
we settle the case $c_1+c_2+c_3=5$.
For now we do not have the right tools to conjecture the sum side for general  $(c_1,c_2,c_3)$ and we do not know
how to go further. Nevertheless the problem seems to have some beautiful structure.
Let us define
$
f_{n,c}(q)
$
to be the generating function of cylindric partitions with profile $c$ with entries bounded by $n$.
Or equivalently,
$$
f_{n,c}(q):=[z^n]\frac{F_c(z,q)}{1-z}.
$$
And let $g_{n,c}:=[z^n]G_c(z,q)$.
For example, using the combinatorics of lattice paths, one can prove the following.
\begin{lemma}
The generating function of cylindric partitions with profile $(\ell,0,0)$ with entries bounded by $n$
is
$$f_{n,(\ell,0,0)}(q)=\frac{P_{n,(\ell,0,0)}(q)}{(q^3;q^3)_n},$$
where $P$ is a polynomial in $q$ with integer coefficients
and $P_{n,(\ell,0,0)}(1)={\ell+2\choose 2}^n$.
\end{lemma}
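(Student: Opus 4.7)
The plan is to encode each cylindric partition by its level-by-level column counts and then disentangle a partition part (yielding the denominator $(q^3;q^3)_n$) from a finite walk in a triangle (yielding the polynomial $P$). For $k \geq 1$ and $i \in \{1,2,3\}$, set $a_k^{(i)} := \#\{j : \lambda^{(i)}_j \geq k\}$. The profile $(\ell,0,0)$ conditions $\lambda^{(1)}_j \geq \lambda^{(2)}_j \geq \lambda^{(3)}_j$ and $\lambda^{(3)}_j \geq \lambda^{(1)}_{j+\ell}$ translate, level by level, to $a_k^{(1)} \geq a_k^{(2)} \geq a_k^{(3)} \geq a_k^{(1)} - \ell$. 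Thus $A_k := (a_k^{(1)}, a_k^{(2)}, a_k^{(3)})$ lies in $P_\ell := \{(a,b,c) \in \N^3 : a \geq b \geq c,\ a - c \leq \ell\}$. Entries bounded by $n$ force $A_k = (0,0,0)$ for $k > n$, while $(A_k)_{k=1}^n$ is coordinatewise weakly decreasing and carries total weight $|\Lambda| = \sum_{k=1}^n (a_k^{(1)} + a_k^{(2)} + a_k^{(3)})$.

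Next, I would parametrize each state by $c_k := a_k^{(3)}$ and the offsets $(\alpha_k, \beta_k) := (a_k^{(1)} - c_k,\ a_k^{(2)} - c_k)$, which lie in the finite triangle $T_\ell := \{(\alpha, \beta) \in \Z^2 : 0 \leq \beta \leq \alpha \leq \ell\}$ of cardinality $\binom{\ell+2}{2}$. Then $|A_k| = 3c_k + \alpha_k + \beta_k$, and the chain condition $A_k \geq A_{k+1}$ is equivalent to
\[
c_k - c_{k+1} \geq d_k := \max(\alpha_{k+1} - \alpha_k,\ \beta_{k+1} - \beta_k,\ 0),
\]
under the conventions $c_{n+1} := 0$ and $d_n := 0$. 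For fixed $(\alpha_k, \beta_k)_{k=1}^n \in T_\ell^n$, writing each gap as $c_k - c_{k+1} = d_k + e_k$ with $e_k \geq 0$ gives $\sum_{k=1}^n c_k = \sum_k k(d_k + e_k)$, and summing over the free $e_k$ contributes exactly $\prod_{k=1}^n (1 - q^{3k})^{-1} = 1/(q^3;q^3)_n$.

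Putting everything together,
\[
f_{n,(\ell,0,0)}(q) = \frac{1}{(q^3;q^3)_n} \sum_{(\alpha_k,\beta_k)_k \in T_\ell^n} q^{\sum_{k=1}^n (\alpha_k + \beta_k)\, +\, 3\sum_{k=1}^n k\, d_k},
\]
so the right-hand sum is $P_{n,(\ell,0,0)}(q)$. It consists of $|T_\ell|^n = \binom{\ell+2}{2}^n$ monomial terms, each with non-negative integer exponent, so $P$ has (non-negative) integer coefficients and $P_{n,(\ell,0,0)}(1) = \binom{\ell+2}{2}^n$. The ``lattice paths'' phrasing of the statement refers to the sequences $(\alpha_k, \beta_k)_{k=1}^n$, viewed as $n$-step walks in the triangle $T_\ell$. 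The only real obstacle is locating this parametrization: once the unbounded $c$-degree of freedom is cleanly separated from the bounded $(\alpha,\beta)$-walk by absorbing the minimum drop $d_k$, the remaining calculation is a direct geometric series summation.
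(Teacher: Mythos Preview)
Your argument is correct. The conjugation map $a_k^{(i)}=\#\{j:\lambda^{(i)}_j\ge k\}$ does give a bijection between cylindric partitions of profile $(\ell,0,0)$ with parts $\le n$ and coordinatewise weakly decreasing chains $A_1\ge\cdots\ge A_n\ge(0,0,0)$ in $P_\ell$, the weight computation is right, and the change of variables $(c_k)\leftrightarrow(e_k)$ cleanly peels off the factor $1/(q^3;q^3)_n$. The remaining finite sum over $T_\ell^n$ is visibly a polynomial with non-negative integer coefficients and value $\binom{\ell+2}{2}^n$ at $q=1$.

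The paper does not actually write out a proof; it only says ``using the combinatorics of lattice paths'' and points to Gessel--Krattenthaler for the encoding of cylindric partitions as families of non-intersecting lattice paths on a cylinder. In that framework one would view the three partitions as three non-intersecting paths whose mutual distances are bounded by the profile, and the denominator $(q^3;q^3)_n$ comes from translating the whole family while the numerator counts the relative configurations. Your column-slicing argument reaches the same endpoint by a more elementary and self-contained route: instead of invoking the non-intersecting paths machinery, you pass to conjugates directly and read off the walk $(\alpha_k,\beta_k)$ in the triangle $T_\ell$. What you gain is an explicit formula for $P_{n,(\ell,0,0)}(q)$ and immediate non-negativity of its coefficients (which the lemma does not even assert), at the cost of being specific to profiles of the shape $(\ell,0,\dots,0)$; the Gessel--Krattenthaler viewpoint generalises more readily to arbitrary profiles.
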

See \cite{GesselKrattenthaler} to see how to write cylindric partitions as non-intersecting paths.

Using  Theorem \ref{th:CW}, we can prove that this holds in more generality.
The generating function for cylindric partitions with profile $c=(c_1,c_2,c_3)$ with entries bounded by $n$
is
$$f_{n,c}(q)=\frac{P_{n,c}(q)}{(q^3;q^3)_n},$$
where $P$ is a polynomial in $q$ with integer coefficients
and $P_{n,c}(1)={\ell+2\choose 2}^n$, where $\ell=c_1+c_2+c_3$.
We conjecture that
\begin{conjecture}
For any profile $c=(c_1,c_2,c_3)$, the polynomial $P_{n,c}(q)$
has non-negative coefficients. Moreover if $c_1+c_2+c_3\not\equiv 0 \mod 3$,
the generating function $g_{n,c}$ can be written as
$$
g_{n,c}=\frac{1}{(q;q)_{n}}Q_{n,c}(q),
$$
where $Q_{n,c}(q)$ is a polynomial in $q$ with non-negative coefficients
and $Q_{n,c}(1)=\left(\frac{(\ell+2)(\ell+1)}{6}-1\right)^n$, where $\ell=c_1+c_2+c_3$.
\end{conjecture}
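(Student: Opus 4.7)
The plan is to extract the coefficient of $z^n$ from the Corteel--Welsh $q$-difference equations of Theorems \ref{th:CW} and \ref{th:CWforG}. For each fixed size $\ell$, this produces a finite coupled system of $q$-recurrences, in $n$, on the collections $\{f_{n,c}(q)\}_c$ and $\{g_{n,c}(q)\}_c$ indexed by profiles $c$ with $c_1+c_2+c_3=\ell$; the trivial initial data $f_{0,c}=g_{0,c}=1$ pin down unique solutions. This is the same approach already used for $\ell=5$ in Section \ref{sec:proof}.

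The denominator structure for $f_{n,c}$, namely $(q^3;q^3)_n f_{n,c}(q)\in\Z[q]$, is already asserted in the excerpt and follows by induction on $n$ from this system. The analogous statement $(q;q)_n g_{n,c}(q)\in\Z[q]$ for $\ell\not\equiv 0\pmod 3$ should be provable in the same manner, with the obstruction in the case $\ell\equiv 0\pmod 3$ traceable to the Borodin product containing an extra factor that prevents clearing the denominator. The evaluations at $q=1$ follow once the recurrences are in hand: set $q=1$ after clearing denominators and obtain a linear recurrence over $\Z$ whose state space is the set of profiles of size $\ell$. One then checks that the constant sequence $P_{n,c}(1)=\binom{\ell+2}{2}^n$ is an eigenvector of the resulting transition matrix (the dimension $\binom{\ell+2}{2}$ matching the number of profiles), and similarly that $Q_{n,c}(1)=\bigl(\tfrac{(\ell+2)(\ell+1)}{6}-1\bigr)^n$ is an eigenvector of the smaller matrix that survives because the prefactors $(zq;q)_{|J|-1}$ in Theorem \ref{th:CWforG} vanish at $q=1$ whenever $|J|\ge 2$.

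The truly hard claim is non-negativity of coefficients. The natural approach is to mirror the proof of Theorem \ref{th:sum_side}: guess an explicit multi-sum of $q$-Pochhammers and $q$-binomials whose summand has manifestly non-negative coefficients, then prove equality with $g_{n,c}(q)$ (or $f_{n,c}(q)$) by matching $q$-recurrences via the \texttt{qFunctions} and \texttt{HolonomicFunctions} packages, exactly as in Section \ref{sec:proof}. Concretely, for small $\ell$ one would tabulate the first several $Q_{n,c}(q)$ by iterating the $q$-difference system, then search within an Ansatz inspired by Theorem \ref{th:sum_side}: a skeleton of nested $q$-binomial coefficients with one summation variable per unit of $\ell$, a quadratic form in the exponent, and a polynomial correction factor patterned on those in \eqref{eq:sum401}, \eqref{eq:sum302}, and \eqref{eq:sum320}. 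For $\ell\equiv 0\pmod 3$, where no $Q_{n,c}$ candidate is predicted, non-negativity of $P_{n,c}$ would more likely be attacked bijectively along the lines of the RSK argument of \cite{CorteelRSK}, by identifying a combinatorial family enumerated by $P_{n,c}(q)$ that pairs naturally with partitions into parts from $\{3,\dots,3n\}$.

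The main obstacle is precisely this guessing step: the authors themselves remark that they lack the tools to conjecture the sum side for general $(c_1,c_2,c_3)$, and the shape of the summand (number of sums, polynomial correction) appears to grow in a presently uncontrolled way with $\ell$. Making the guessing step systematic, perhaps by developing an $A_2$ Bailey chain that would generate the required Ansatz from a seed identity in the spirit of \cite{AndrewsSchillingWarnaar}, looks to be the real content of the conjecture, and everything else should be comparatively routine once such an Ansatz is available.
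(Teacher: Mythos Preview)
The statement you are addressing is explicitly a \emph{Conjecture} in the paper, not a theorem; the paper offers no proof, only the remark that it ``has been checked for small values of $k$ and $n$.'' So there is no proof in the paper to compare your proposal against.

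Your write-up is, accordingly, not a proof but a research outline, and you are candid about this: you flag that the non-negativity claim is ``the truly hard claim,'' that the needed multi-sum Ans\"atze are not known in general, and that ``the authors themselves remark that they lack the tools to conjecture the sum side for general $(c_1,c_2,c_3)$.'' That is an accurate reading of the paper. The pieces you describe as ``comparatively routine'' --- extracting coupled $q$-recurrences from Theorem~\ref{th:CWforG}, clearing denominators by induction, and reading off the $q=1$ evaluations --- are plausible and in the spirit of Section~\ref{sec:proof}, but even those are only sketched (e.g.\ you do not actually verify the eigenvector claims for the $q=1$ transition matrices, nor explain precisely why $\ell\equiv 0\pmod 3$ obstructs $(q;q)_n g_{n,c}\in\Z[q]$). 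The core assertion of the conjecture, non-negativity of $P_{n,c}(q)$ and $Q_{n,c}(q)$, remains entirely unaddressed beyond ``guess a positive formula and check it symbolically,'' which is exactly the step the paper identifies as open.

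In short: there is no gap to name because there is no proof here, only a reasonable plan whose hard step is acknowledged to be missing; and there is nothing to compare because the paper itself leaves the statement as a conjecture.
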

This conjecture is true for any $n$ if $c_1+c_2+c_3=2$ (Roger-Ramanujan identities), $c_1+c_2+c_3=4$ ($A_2$ Rogers Ramanujan 
identities) and $c_1+c_2+c_3=5$ (this paper).
For example
$$
Q_{n,(2,2,1)}(q)=\sum_{n_2,n_3,n_4\geq 0 }q^{n^2+n_2^2+n_3^2+n_4^2-nn_2 + n_2n_4} {n\brack n_2}_q{n\brack n_4}_q{n_2 \brack n_3}_q
$$
and $Q_{n,(2,2,1)}(1)=6^n$.
This conjecture has been checked for small values of $k$ and $n$.

In our case $c_1+c_2+c_3=5$, we know that the product formula for $(c_1,c_2,c_3)$ is the same as the formula for any cyclic permutation
of the $c_i$'s. Nevertheless we did not find a $q$-series transformation that proves directly that the sum sides
are equal. The only way we prove this is by using the fact that both sums are equal to the same product.
For example, we leave as an open problem to give a direct proof of the fact that:
\begin{align*}
\sum_{n_1,n_2,n_3,n_4\geq 0 }& \frac{q^{n_1^2+n_2^2+n_3^2+n_4^2+n_2+n_3+n_4-n_1n_2 + n_2n_4}}{(q;q)_{n_1}} {n_1\brack n_2}_q{n_1\brack n_4}_q{n_2 \brack n_3}_q\\
 &=\sum_{n_1,n_2,n_3,n_4\geq 0 } \frac{q^{n_1^2+n_2^2+n_3^2+n_4^2+n_1+n_3-n_1n_2 + n_2n_4} \left( 1 + q^{n_1+n_2+n_4+1} \right)}{(q;q)_{n_1}} {n_1\brack n_2}_q{n_1\brack n_4}_q{n_2 \brack n_3}_q.
\end{align*}

Finally, we mentioned in the Introduction that Theorem 5.3 of Andrews, Schilling, and Warnaar \cite{AndrewsSchillingWarnaar} gives another expression of all the sums in our Theorem \ref{th:main}, but their sums have a factor $(q;q)_{\infty}$ in front of them, like in Theorem \ref{th:ASW500}. So far, the only way we know that their sums and ours are equal is also because they equal the same infinite products. It would be an interesting exercise to find a direct connection between the sums as well. For example, can one find a $q$-series proof of the equality
\begin{align*}
(q,q)_{\infty} &\sum_{a_1,b_1,a_2,b_2\in\Z} \frac{q^{a_1^2  + b_1^2 + a_2^2  + b_2^2 - a_1 b_1 + a_2 b_2 + a_1 + a_2 + b_1 + b_2}}{(q;q)_{a_1-a_2}(q;q)_{b_1-b_2}(q;q)_{a_2}(q;q)_{b_2}(q;q)_{a_2+b_2+1}} \\
&= \sum_{n_1,n_2,n_3,n_4\geq 0 } \frac{q^{n_1^2+n_2^2+n_3^2+n_4^2+n_1+n_2+n_3+n_4-n_1n_2 + n_2n_4}}{(q;q)_{n_1}} {n_1\brack n_2}_q{n_1\brack n_4}_q{n_2 \brack n_3}_q ?
\end{align*}

\bibliographystyle{alpha}
\bibliography{biblio}

\end{document}